\begin{document}

\newtheorem{theorem}{Theorem}
\newtheorem{lemma}{Lemma}
\newtheorem{proposition}{Proposition}
\newtheorem{corollary}{Corollary}
\newtheorem{definition}{Definition}
\theoremstyle{definition}
\newtheorem{example}{Example}
\newtheorem{remark}{Remark}
\newenvironment{romanlist}{
	\def\theenumi{\roman{enumi}}\def\theenumii{\alph{enumii}}
	\def\labelenumi{(\theenumi)}\def\labelenumii{(\theenumii)}%
	\let\item\Item
	\begin{enumerate}
	}{
	\end{enumerate}}
	
	\let\Item\item
\newenvironment{enumeroman}{%
  \def\theenumi{\roman{enumi}}\def\theenumii{\alph{enumii}}%
  \def\labelenumi{(\theenumi)}\def\labelenumii{(\theenumii)}%
		\let\item\Item
  \begin{enumerate}%
}{%
  \end{enumerate}}
	
\def\address#1{\expandafter\def\expandafter\@aabuffer\expandafter
	{\@aabuffer{\affiliationfont{#1}}\relax\par
	\vspace*{13pt}}}

\setcounter{page}{1}

\title{Primitive irreducible representations of finitely generated
nilpotent groups}

\author{Anatolii.~V.~Tushev}

\address{Department of Mathematics, Dnipro National University,\\
Gagarin Avenue 72,Dnipro, 49065, Ukraine\\
anavlatus@gmail.com}

\maketitle

\begin{center}
 
\small{Department of Mathematics, Dnipro National University,\\
Gagarin Avenue 72, Dnipro, 49065, Ukraine\\
anavlatus@gmail.com}

\end{center}

\begin{abstract}
In the  paper we show that any irreducible representation of a
finitely generated nilpotent group $G$ over a finitely generated field $F$
of characteristic zero is induced from a primitive representation of some
subgroup of $G$.
\end{abstract}

\section{Introduction}

Let $G$ be a group and let $R$ be a ring. Let $H$ be a subgroup of the
group $G$ and let $U$ be a right $RH$-module. Since the group ring $RG$ can
be considered as a left $RH$-module, we can define the tensor product 
$U\otimes _{RH}RG$, which is a right $RG$-module named the $RG$-module
induced from the $RH$-module $U$. If $M$ is an $RG$-module and $U\leq M$,
then it follows from \cite[Chap. 2, Lemma 1.1]{Karp90} that 
$M=U\otimes _{RH}RG$  if and only if $M=\oplus_{t\in T}Ut$, where $T$ 
is a right transversal to the subgroup $H$ in $G$.\par

If a group $G$ has a finite series each of whose factor is either cyclic or
quasi-cyclic then $G$ is said to be minimax. If in such a series all
infinite factors are cyclic then the group $G$ is said to be polycyclic; the
number $h(G)$ of infinite factors in such a series is the Hirsch number of $G$. 
By \cite[Theorem 2.13]{Wehr09}, any finitely generated nilpotent group 
is polycyclic. \par 

Let $G$ be a group, let $F$ be a field, let $\varphi$ be a representation
of $G$ over $F$ and let $M$ be an $FG$-module of the representation $\varphi$. 
The representation  $\varphi$  is said to be faithful if $Ker \varphi =1$. 
If $M$ is induced from some $FH$-module $U$, where $H$ is a
subgroup of the group $G$, then we say that the representation $\varphi $ is
induced from a representation $\phi $ of the subgroup $H$, where $U$ is the
module of the representation $\phi $. The module $M$ and the representation 
$\varphi$ are said to be primitive if there are no subgroups $H<G$ such that  
$M$ is  induced from an $FH$-submodule. If the group $G$ is polycyclic then the
module $M$ and the representation $\varphi$ are said to be semi-primitive if 
there are no subgroups $H\leq G$ such that $h(H)<h(G)$ and $M$ is  induced from 
an $FH$-submodule.\par

There are many results which show that the existence of a faithful
irreducible representation of a group $G$ over a field $F$ may have
essential influence on the structure of the group $G$ (see for instance \cite
{Szec16,SzTu17,Tush90,Tush93,Tush2000,Tush02,Tush12}). Certainly, primitive
irreducible modules are a basic subject for investigations when we are
dealing with irreducible representations.\par

In \cite{Harp77} Harper solved a problem raised by Zaleskii and proved that
any not abelian-by-finite finitely generated nilpotent group has an
irreducible primitive representation over a not locally finite field. In 
\cite{Tush02} we proved that if a minimax nilpotent group $G$ of class 2 has
a faithful irreducible primitive representation over a finitely generated
field of characteristic zero then the group $G$ is finitely generated. In 
\cite{Harp80} Harper studied polycyclic groups which have faithful
irreducible semi-primitive representations. It is well known that any
polycyclic group is finitely generated soluble of finite rank and meets the
maximal condition for subgroups (in particular, for normal subgroups). In 
\cite{Tush2000} we showed that in the class of soluble groups of finite rank
with the maximal condition for normal subgroups only polycyclic groups may
have faithful irreducible primitive representations over a field of
characteristic zero.\par

It is well known that any finitely dimensional irreducible representation 
of a group $G$  is induced from a primitive representation of some 
subgroup of $G$. It is evident that any representation of a polycyclic group 
$G$ over a field $F$ is induced from a semi-primitive representation of some 
subgroup $H$ of $G$.  We say that a semi-primitive representation of $G$ is 
purely semi-primitive if it is not induced from a primitive representation of some 
subgroup $H$ of $G$ and it is still unknown whether 
polycyclic groups may have purely semi-primitive representations. 
In the paper we show that under some additional conditions on the group 
$G$ and the field $F$ semi-primitive irreducible representations 
do not occur.  Our main result (Theorem 1) states that any irreducible 
representation of a finitely generated nilpotent group $G$ over a finitely 
generated field $F$ of characteristic zero is induced from a primitive 
representation of some subgroup of $G$. So, the following questions 
naturally arise.\par 

\medskip

\textbf{Question 1} \textit{Is it true that any irreducible representation
of a finitely generated nilpotent group $G$ over a field $F$ is induced 
from a primitive representation of some subgroup of $G$?}\par

\medskip

\textbf{Question 2} \textit{Is it true that any irreducible representation
of a minimax nilpotent group $G$ over a field $F$  is induced from a 
primitive representation of some subgroup of $G$?}\par

\medskip

Our methods of investigations are based on the following approach introduced by Brookes in \cite{Broo88}. 
Let $N$ be a group and let $K$ be a normal subgroup of $N$ such that the quotient group $N/K$ is finitely 
generated free abelian. Let $R$ be a ring and let $W$ be a finitely 
generated $RN$-module. Let $I$ be an $N$-invariant ideal of $RK$ such 
that $\left| {K/{I^\dag }} \right| < \infty $ and $k = R/(R \cap I)$ is a field, 
where ${I^\dag } = K \cap (1 + I)$. Then $N/{I^\dag }$ has a central subgroup 
$A$ of finite index. So, the quotient module $W/WI$ may be considered as a 
finitely generated $kA$-module and we can apply powerful techniques of 
commutative algebra for studying properties of $W/WI$. The results of the 
paper show that properties of $W$ and $W/WI$ are deeply related. 

The main results of the paper were announced in  \cite{Tush21}. 
The author is deeply grateful to the referee for helpful notes.


\section{ Auxiliary results}

 
\begin{lemma}
Let $G$ be a group and let $D$ be a finite central subgroup of $G$. 
Let $\{H_{i}|\in \mathbb{N}\}$ be a descending chain of subgroups of $G$. 
Then:\par 

(i) if there is $n\in \mathbb{N}$ such that $D{H_{n}}=D{H_{i}}$ for all $
i\geq n$ then there is $m\in \mathbb{N}$ such that ${H_{m}}={H_{i}}$ for all 
$i\geq m$;\par 

(ii) if $\cap_{i\in \mathbb{N}}{H_{i}}=1$ then $\cap_{i\in\mathbb{N}}DH_i=D$.
\end{lemma}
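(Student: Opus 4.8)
The plan is to exploit the finiteness and centrality of $D$ throughout, the whole argument hinging on a single index formula. Since $D$ is central it is normal in $G$, so each product $DH_i$ is a subgroup, and since $dhd^{-1}=h$ for all $d\in D$, $h\in H_i$, the subgroup $H_i$ is normal in $DH_i=\langle D,H_i\rangle$ (though in general not in $G$). Applying the second isomorphism theorem \emph{inside} the group $DH_i$, with normal subgroup $H_i$ and subgroup $D$, yields $DH_i/H_i\cong D/(D\cap H_i)$, so that
\[
[DH_i:H_i]=|D:D\cap H_i|\leq |D|<\infty .
\]
This finiteness, forced by $D$ being finite, is the engine for both parts.

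For (i), I would fix the index $n$ supplied by the hypothesis. Then $DH_i=DH_n$ for all $i\geq n$, so every such $H_i$ is a subgroup of the single fixed group $DH_n$, and I consider the integer sequence $a_i=[DH_n:H_i]=[DH_i:H_i]=|D:D\cap H_i|$ for $i\geq n$. By the displayed formula each $a_i$ is bounded above by $|D|$, while the descending chain condition $H_{i+1}\leq H_i$ together with multiplicativity of indices gives $a_{i+1}=a_i\,[H_i:H_{i+1}]\geq a_i$, so the sequence is non-decreasing. A non-decreasing sequence of positive integers bounded by $|D|$ is eventually constant; choosing $m\geq n$ past the point of stabilization forces $[H_i:H_{i+1}]=1$, that is $H_i=H_m$, for all $i\geq m$.

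For (ii), the inclusion $D\subseteq\bigcap_{i}DH_i$ is immediate, so only the reverse inclusion needs work. I would take $x\in\bigcap_{i}DH_i$ and write $x=d_ih_i$ with $d_i\in D$ and $h_i\in H_i$ for each $i$. Since $D$ is finite, the pigeonhole principle gives a fixed $d\in D$ with $d_i=d$ for infinitely many $i$, and for each such $i$ one has $d^{-1}x=h_i\in H_i$. Because the $H_i$ form a descending chain, membership in $H_i$ for arbitrarily large $i$ forces membership in every $H_j$; hence $d^{-1}x\in\bigcap_{i}H_i=1$, giving $x=d\in D$, as required.

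I expect the only genuinely subtle point to be the correct placement of the second isomorphism theorem in the first paragraph: since $H_i$ is normal in $DH_i$ but not in $G$, the isomorphism $DH_i/H_i\cong D/(D\cap H_i)$ must be derived inside $DH_i$ rather than in $G$. Once that index formula is secured, part (i) reduces to the elementary fact that a bounded monotone integer sequence stabilizes, and part (ii) to the pigeonhole argument combined with the descending chain property, both of which are routine.
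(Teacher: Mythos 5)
Your proof is correct, and it takes a somewhat different route from the paper's. For part (i) the paper first stabilizes the descending chain $D\cap H_i$ inside the finite group $D$, passes to the quotient $G/K$ with $K=D\cap H_1$ so that $DH_i=D\times H_i$, and then reads off $H_i=H_1$ from the uniqueness of the direct-product decomposition; your index count $a_i=[DH_n:H_i]=|D:D\cap H_i|\le |D|$ combined with the multiplicativity $a_{i+1}=a_i\,[H_i:H_{i+1}]$ packages the same finiteness into a single bounded monotone integer sequence and avoids both the quotient and the reduction to the split case. For part (ii) the paper likewise reduces to $D\cap H_i=1$ for all large $i$, so that the factorization $x=dh$ is unique and $dh\in DH_i$ if and only if $h\in H_i$; your pigeonhole argument on the $D$-components $d_i$ reaches the same conclusion without that normalization. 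Both of your arguments are complete. The one remark worth making is that the identity $[DH_i:H_i]=|D:D\cap H_i|$, which you single out as the delicate step, needs only that $DH_i$ is a subgroup: the cosets $dH_i$ with $d\in D$ exhaust $DH_i/H_i$ and satisfy $dH_i=d'H_i$ if and only if $d^{-1}d'\in D\cap H_i$, so they are in bijection with the cosets of $D\cap H_i$ in $D$; hence even the appeal to the second isomorphism theorem can be bypassed.
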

\begin{proof}

(i) There is no harm in assuming that $n=1$. Since $\{D\cap {H_{i}}
|i\in\mathbb{N}\}$ is a descending chain of subgroups of $D$ and $|D|<\infty$, 
we can conclude that there is $m\in\mathbb{N}$ such that $D\cap {H_{m}} 
=D\cap {H_{i}}$ for all $i\geq m$. We also may assume that $m=1$. Thus, we
can assume that $K=D\cap {H_{1}}=D\cap {H_{i}}$ for all $i\in \mathbb{N}$. 
As the subgroup $D$ is central, passing to the quotient group $G/K$ we can 
assume that $1=D\cap {H_{1}}=D\cap {H_{i}}$ for all $i\in \mathbb{N}$. Then 
$DH_i=D\times {H_i}$ for all $i\in\mathbb{N}$ and, as $H_{i}\leq {H_{1}}$, 
we can conclude that $DH_i=DH_1$ if and only if ${H_{i}=H_{1}}$.\par  

(ii). Since $\cap_{i\in\mathbb{N}}{H_{i}}=1$, there is $m\in\mathbb{N}$ 
such that $D\cap {H_{i}}=1$ for all $i\geq m$. Evidently, there is no harm
in assuming that $m=1$. Then any element of $D{H_1}$ can be unequally
written in the form $dh$, where $d\in D$, $h\in {H_{1}}$, and hence $dh\in
D {H_{i}}$ if and only if $h\in {H_{i}}$. As $\cap_{i\in \mathbb{N}}{H_{i}}
=1$, it implies that $\cap_{i\in\mathbb{N}}D{H_{i}}=D$.
\end{proof}


\begin{lemma}
Let $G$ be a finitely generated nilpotent group and let $D$ be
the derived subgroup of $G$. Let $\{ {H_i} | i \in\mathbb{N} \}$ be a
descending chain of subgroups of finite index in $G$. Suppose that there is 
$n\in\mathbb{N}$ such that $DH_n = DH_i$ for all $i \geq n$. Then there
is $m\in\mathbb{N}$ such that $H_m = H_i$ for all $i\geq m$.
\end{lemma}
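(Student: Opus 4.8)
The plan is to induct on the nilpotency class $c$ of $G$. If $c=1$ then $G$ is abelian, so $D=1$ and the hypothesis already reads $H_n=H_i$ for all $i\ge n$, which is the conclusion. Assume $c\ge 2$ and that the statement holds for finitely generated nilpotent groups of class less than $c$. Put $Z=\gamma_c(G)$, the last nontrivial term of the lower central series; then $Z$ is a finitely generated central subgroup of $G$ and, since $c\ge 2$, we have $Z\subseteq \gamma_2(G)=D$. Passing to $\overline{G}=G/Z$, which has class at most $c-1$, the images $\overline{H_i}=H_iZ/Z$ form a descending chain of subgroups of finite index, and $\overline{D}\,\overline{H_i}=DH_i/Z$ is eventually constant because $DH_i$ is (here $\overline{D}=(G/Z)'=D/Z$). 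By the inductive hypothesis $\overline{H_i}$ stabilises; discarding finitely many initial terms, which is harmless for the conclusion, I may therefore assume that $H_iZ=L$ is independent of $i$, where $L=H_1Z$ has finite index in $G$.

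Next I would exploit that $Z$ is central. For every $i$ the identity $\gamma_k(AZ)=\gamma_k(A)$ $(k\ge 2)$, valid for any subgroup $A$ and any central subgroup $Z$, gives $\gamma_c(H_i)=\gamma_c(H_iZ)=\gamma_c(L)$; thus $W:=\gamma_c(H_i)=\gamma_c(L)$ does \emph{not} depend on $i$. Moreover $W=\gamma_c(H_i)\subseteq\gamma_c(G)=Z$ and $W\subseteq H_i$, so $W\subseteq Z\cap H_i$ for all $i$. The decisive point is that $W$ has finite index in $Z$: since $L$ is a finite-index subgroup of the finitely generated nilpotent group $G$, the term $\gamma_c(L)$ has finite index in $\gamma_c(G)=Z$. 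Hence $Z/W$ is a finite central subgroup of $L/W$, the chain $\{H_i/W\}$ is descending (as $W\subseteq H_i$), and $(Z/W)(H_i/W)=H_iZ/W=L/W$ is constant. Applying Lemma 1(i) to $L/W$ with the finite central subgroup $Z/W$ and the chain $\{H_i/W\}$, I conclude that $H_i/W$, and therefore $H_i$, is eventually constant.

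The main obstacle I anticipate is the assertion that $\gamma_c(L)$ has finite index in $\gamma_c(G)$, i.e. that passing to a finite-index subgroup changes the deepest term of the lower central series only by a finite amount; this is a standard property of finitely generated nilpotent groups, but it is exactly what turns the central quotient $Z/W$ into a \emph{finite} group and thereby lets Lemma 1(i) take over. The conceptual crux is the observation that, once $H_iZ$ has been made constant, the subgroup $\gamma_c(H_i)$ no longer depends on $i$ and already absorbs all but a finite central piece of $Z$; everything else is the commutator identity $\gamma_c(H_iZ)=\gamma_c(H_i)$ for central $Z$ together with the bookkeeping of discarding finitely many terms, which costs nothing since the conclusion is itself asymptotic.
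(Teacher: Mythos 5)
Your proof is correct and follows essentially the same route as the paper: induct on the nilpotency class, quotient by the last nontrivial lower central term $Z$ to make $H_iZ$ constant, exhibit a finite-index subgroup of $Z$ contained in every $H_i$, and finish with Lemma 1(i). The one step you leave as a cited standard fact --- that $\gamma_c(L)$ has finite index in $\gamma_c(G)$ for $L$ of finite index --- is precisely what the paper proves inline via the multilinearity identity $[a^u,b^v]=[a,b]^{uv}$ for top-weight commutators, so the two arguments coincide in substance.
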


\begin{proof}
Proof is by induction on the length of the lower central series $1=
G_{0}\leq {G_{1}}\leq ...\leq {G_{k}}=G$ of the group $G$. Put $K={G_{1}}$
then by the induction hypothesis we can assume that there is $n\in\mathbb{N}$ 
such that $K{H_{n}}=K{H_{i}}$ for all $i\geq n$. Evidently, we can assume 
that $n=1$.\par  

It is well known that $[ab,c]={[a,c]^{b}}[b,c]$ and $[c,ab]=[c,b]{[c,a]^{b}}$ 
for any elements $a,b,c\in G$ (see \cite[Section 3.2]{KaMe79}). As $[{G_{2}} 
,G]={G_{1}}=K\leq Z(G)$, we see that if some of elements $a,c$ belongs to 
${G_{2}}$ then $[ab,c]=[a,c][b,c]$ and $[c,ab]=[c,b][c,a]$. It easily 
implies that if some of elements $a,b$ belongs to ${G_{2}}$ then

\begin{equation}
[{a^{u}},{b^{v}}]=[a,b]^{uv}  \label{1}
\end{equation}

for any $u,v\in \mathbb{N}$. Since $|G:{H_{1}}|<\infty $, there are $u,v\in 
\mathbb{N}$ such that ${a^{u}},{b^{v}}\in H_{1}$ for any $a\in {G_{2}}$ and
any $b\in G$. Then it follows from (\ref{1}) that $[a,b]^{uv}\in \lbrack
H_{1},H_{1}]$. Since the subgroup $K={G_{1}}=[G_{2},G]$ is abelian and
generated by commutators $[a,b]$, where $a\in G_{2}$ and $b\in G$, it
implies that $K^{uv}\leq \lbrack H_{1},H_{1}]\leq H_{1}$. As $KH_{1}=KH_{i}$
for all $i\geq n$, and $K$ is a central subgroup of $G$, we can conclude
that $K^{uv}\leq \lbrack {H_{1}},{H_{1}}]=[K{H_{1}},K{H_{1}}]=[K{H_{i}},K{
H_{i}}]=[{H_{i}},{H_{i}}]\leq {H_{i}}$ for all $i\in \mathbb{N}$.\par 

Since the subgroup $K^{uv}$ is central, we can pass to the quotient group $
G/K^{uv}$ and hence, as $|{K/}K^{uv}|<\infty$, we can assume that the
subgroup $K$ is finite. Then the assertion follows from Lemma 1.1(i).
\end{proof}

Let $p$ be a prime integer then $n\in \mathbb{N}$ is said to be a $p^{\prime
}$-number if $p$ does not divide $n$. A finite group is called
a $p^{\prime}$-group if its order is a $p^{\prime}$-number.

  
\begin{proposition}
Let $G$ be a finitely generated nilpotent group and let $D$
be the derived subgroup of $G$. Let $\{{H_{i}}|i\in\mathbb{N}\}$ be a
descending chain of subgroups of finite index in $G$ such that $|G:\cap
_{i\in \mathbb{N}}{H_{i}}|=\infty $. Then there is a descending chain 
$\{G_i|i\in\mathbb{N}\}$ of subgroups of finite index in $G$ such that 
$DH_i\leq G_i$, the quotient group $G/(\cap_{i\in\mathbb{N}}G_i)$
is free abelian  and:\par

(i) either for any prime integer $p$ there is $n\in\mathbb{N} $ such that $p
\notin \pi ({G_n}/{G_i})$ for all $i \geq n$;\par

(ii) or $G/{G_i}$ is a $p$-group for all $i\in\mathbb{N} $, where $p$ is a
prime integer.
\end{proposition}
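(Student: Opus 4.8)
The plan is to push everything into the abelianization and then into its torsion-free quotient, where the lattice structure makes both the free-abelianness and the prime bookkeeping transparent. Write $A=G/D$ and let $T$ be the (finite) torsion subgroup of the finitely generated abelian group $A$; set $A'=A/T\cong\mathbb{Z}^{r}$ and let $\theta\colon G\to A'$ be the natural epimorphism. The images $\bar H_i=\theta(DH_i)=\theta(H_i)$ form a descending chain of finite-index subgroups of $A'$. First I would check that $\big[A':\bigcap_i\bar H_i\big]=\infty$: were this index finite, the chain $\bar H_i$ would stabilize; applying Lemma 1.1(i) in $A$ with the finite central subgroup $T$, the chain $DH_i/D$ would stabilize, hence so would $DH_i$, and then Lemma 1.2 would force $H_i$ to stabilize, contradicting $[G:\bigcap_iH_i]=\infty$. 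Every subgroup I construct will be of the form $G_i=\theta^{-1}(\bar G_i)$ for a suitable finite-index $\bar G_i\geq\bar H_i$ of $A'$; then automatically $DH_i\leq G_i$, the chain $\{G_i\}$ descends, and $G/\bigcap_iG_i\cong A'/\bigcap_i\bar G_i$. So it suffices to build $\bar G_i\leq\mathbb{Z}^{r}$ with free-abelian intersection-quotient and the required prime behaviour.

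Let $L=\bigcap_i\bar H_i$ and let $\hat L=\{a\in A':na\in L\text{ for some }n\geq1\}$ be its isolator; then $A'/\hat L$ is torsion-free, hence free abelian of rank $s$, and $s\geq1$ because $[A':L]=\infty$ forces $\operatorname{rank}L<r$. The decisive elementary fact I would isolate is that $\bigcap_i(\bar H_i+\hat L)=\hat L$. Its proof is short: with $N=[\hat L:L]$ one has $N\hat L\leq L$, so for $a\in\bar H_i+\hat L$ the element $Na$ lies in $\bar H_i+L=\bar H_i$ for every $i$, whence $Na\in L$ and $a\in\hat L$. Thus the baseline chain $\bar G_i=\bar H_i+\hat L$ already has $\bigcap_i\bar G_i=\hat L$ and free-abelian quotient of positive rank; passing to $\mathbb{Z}^{s}=A'/\hat L$ I record the induced chain $\bar K_i=(\bar H_i+\hat L)/\hat L$, which is descending of finite index with $\bigcap_i\bar K_i=0$ and $n_i:=[\mathbb{Z}^{s}:\bar K_i]\to\infty$.

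It remains to arrange one of (i), (ii). For each prime $p$ the exponent $a_p(i)=v_p(n_i)$ is non-decreasing in $i$ (because $\bar K_{i+1}\leq\bar K_i$), so it either stabilizes or tends to infinity; let $S=\{p:a_p(i)\to\infty\}$. If $S=\varnothing$, I claim the baseline chain itself satisfies (i): since $\bar G_n/\bar G_i\cong\bar K_n/\bar K_i$ has $p$-part of order $p^{\,a_p(i)-a_p(n)}$, choosing $n$ past the stabilization point of $a_p$ gives $p\notin\pi(\bar G_n/\bar G_i)$ for all $i\geq n$. If instead $S\neq\varnothing$, I fix $p\in S$ and replace $\bar K_i$ by its $p$-saturation $\bar G_i^{(0)}=\{a:ma\in\bar K_i\text{ for some }p'\text{-number }m\}$; a direct check shows $\mathbb{Z}^{s}/\bar G_i^{(0)}$ is the $p$-component of $\mathbb{Z}^{s}/\bar K_i$, hence a $p$-group of order $p^{\,a_p(i)}\to\infty$, and that $\{\bar G_i^{(0)}\}$ descends. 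Applying the isolator fact once more to this chain — i.e.\ setting $\bar G_i=\bar G_i^{(0)}+\hat W$ with $\hat W$ the isolator of $W=\bigcap_i\bar G_i^{(0)}$ — I obtain a descending chain of $p$-power index with $\bigcap_i\bar G_i=\hat W$ and $\mathbb{Z}^{s}/\hat W$ free abelian of positive rank, so that $G/G_i$ is a $p$-group for every $i$ and (ii) holds.

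The \emph{main obstacle} is the tension between the two requirements on $\bigcap_iG_i$: forcing the successive indices to be $p$-powers (condition (ii)) tends to leave torsion in the limit quotient — as already the chain $p\mathbb{Z}\oplus p^{i}\mathbb{Z}\leq\mathbb{Z}^{2}$ shows — so one cannot simply take $p$-saturations and stop. The resolution is the isolator identity $\bigcap_i(\bar H_i+\hat L)=\hat L$, which lets me purify the limit without disturbing the $p$-power indices (a quotient of a finite $p$-group is again a $p$-group) and without losing the containment $\bar K_i\leq\bar G_i$. I therefore expect the whole argument to rest on that identity together with the monotonicity of the exponents $a_p(i)$.
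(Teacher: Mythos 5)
Your proof is correct and follows essentially the same route as the paper: reduction to the abelianization via Lemma 2, the same dichotomy on primes, the same $p'$-saturation construction for case (ii), and the paper's Lemma 1(ii) reappearing as your isolator identity $\bigcap_i(\bar H_i+\hat L)=\hat L$ to make the limit quotient torsion-free. The only substantive difference is cosmetic but welcome: by working in the torsion-free quotient throughout and tracking the valuations $v_p(n_i)$, you also purify the limit in case (i), a step the paper leaves implicit.
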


\begin{proof}

It is easy to note that $|G:\cap_{i\in\mathbb{N}}{H_{i}}|<\infty $
if and only if there is $m\in\mathbb{N}$ such that ${H_{m}}={H_{i}}$ for all 
$i\geq m$. Then it follows from Lemma   2, that $|G/(\cap_{i\in\mathbb{N}}
D {H_{i})}|=\infty $ and hence there is no harm in assuming that $D\leq {
H_{i}} $ for all $i\geq m$. Then passing to the quotient group $G/D$ we can
assume that the group $G$ is abelian.\par  

(i) If for any prime integer $p$ there is $n\in \mathbb{N}$ such that $
p\notin \pi ({H_{n}}/{H_{i}})$ for all $i\geq n$ then we can put ${G_{i}}={
H_{i}}$ for all $i\in \mathbb{N}$.\par  

(ii) Suppose that the situation (i) does not hold for $\{{H_{i}}|i\in 
\mathbb{N}\}$. Then there is a prime integer $p$ such that for any $i\in 
\mathbb{N}$ there is $j\in \mathbb{N}$ such that $j>i$ and $p\in \pi ({H_{i}}
/{H_{j}})$. Let ${G_{i}}$ be the set of all elements $g\in G$ such that ${
g^{n}}\in {G_{i}}$ for some $p^{\prime }$-number $n\in \mathbb{N}$. It is
easy to check that ${G_{i}}$ is a subgroup of $G$ such that ${H_{i}}\leq {
G_{i}}$, ${G}/{H_{i}}=({G_{i}}/{H_{i}})\times ({S_{i}}/{H_{i}})$, where ${
G_{i}}/{H_{i}}$ is a $p^{\prime }$-group and ${S_{i}}/{H_{i}}$ is a $p$
-group. It implies that $G/{G_{i}}$ is a $p$-group for all $i\in \mathbb{N}$. 
As ${H_{i}}\geq H_{i+1}$, we can conclude that ${G_{i}}\geq G_{i+1}$ and 
hence $\{{G_{i}}|i\in \mathbb{N}\}$ is a descending chain of subgroups of finite
index in $G$. Since for any $i\in \mathbb{N}$ there is $j\in \mathbb{N}$
such that $p\in \pi ({H_{i}}/{H_{j}})$, we can conclude that for any $i\in 
\mathbb{N}$ there is $j\in \mathbb{N}$ such that $j>i$ and $|G/{G_{j}}|>|G/{
G_{i}}|$. It implies that $|G/(\cap_{i\in \mathbb{N}}{G_{i})}|=\infty $.
Passing to the quotient group $G/(\cap_{i\in \mathbb{N}}{G_{i})}$, we can
assume that $\cap_{i\in \mathbb{N}}{G_{i}}=1$. As the group $G$ is finitely 
generated abelian, the torsion subgroup $T$ of $G$ is finite. Then it follows 
from Lemma   1(ii) that $\cap_{i\in \mathbb{N}}{TG_{i}}=T$. Therefore, 
changing $G_{i}$ by $TG_{i}$ and passing to the 
quotient group $G/T$ we can assume that $G$ is torsion-free and hence 
$G$ is free abelian.
\end{proof}

  
\begin{lemma}
Let $G$ be a group of finite rank which has a finite subgroup $D$
such that the quotient group $G/D$ is torsion-free abelian then $G$ has a
characteristic central torsion-free subgroup $A$ of finite index. 
\end{lemma}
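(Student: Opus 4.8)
The plan is to first identify $D$ intrinsically. Since $G/D$ is torsion-free, every element of finite order lies in $D$, and since $D$ is finite every element of $D$ has finite order; hence $D$ is exactly the torsion subgroup of $G$. In particular $D$ is characteristic in $G$, and since $G/D$ is abelian we have $G' \le D$, so $G'$ is finite. These two facts --- that $D = \mathrm{Tor}(G)$ is characteristic and that $G' \le D$ --- are the structural backbone of the whole argument.

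Next I would replace $G$ by the centralizer $C = C_G(D)$. Because $D$ is finite, conjugation gives a homomorphism $G \to \mathrm{Aut}(D)$ with kernel $C$, so $[G:C] \le |\mathrm{Aut}(D)| < \infty$; and since $D$ is characteristic, $C$ is characteristic in $G$. The point of passing to $C$ is that now $D$ is centralised: as $C' \le G' \le D$ and $C$ centralises $D$, we obtain $C' \le Z(C)$, i.e. $C$ is nilpotent of class at most $2$. Any characteristic subgroup I build inside $C$ will be characteristic in $G$, since a characteristic subgroup of a characteristic subgroup is characteristic.

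The main technical step is to show that $Z(C)$ has finite index in $C$; this is where the finite rank hypothesis enters. In the class-$2$ group $C$ the commutator map is bilinear, so $x \mapsto [x,-]$ defines a homomorphism $\theta : C \to \mathrm{Hom}(C,C')$ whose kernel is exactly $Z(C)$, giving an embedding $C/Z(C) \hookrightarrow \mathrm{Hom}(C,C')$. Since $C' \le D$ is finite of exponent $e$, every homomorphism $C \to C'$ factors through $C^{\mathrm{ab}}/(C^{\mathrm{ab}})^{e}$; as $C^{\mathrm{ab}}$ has finite rank (being a quotient of the finite-rank group $C$) and this quotient has exponent dividing $e$, it is a bounded abelian group of finite rank, hence finite. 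Therefore $\mathrm{Hom}(C,C')$ is finite and $[C:Z(C)] < \infty$.

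Finally I would set $A = Z(C)^{e}$, the subgroup of $e$-th powers in the abelian group $Z(C)$, with $e = |D|$. Since $\mathrm{Tor}(Z(C)) = Z(C) \cap D$ has exponent dividing $e$, a short computation shows $A$ is torsion-free, while $Z(C)/A$ has finite rank and exponent dividing $e$ and so is finite; thus $A$ is a torsion-free characteristic subgroup of finite index in $G$. It remains to see that $A$ is central in $G$, and here torsion-freeness does the work for free: for $g \in G$ and $a \in A$ we have $[g,a] \in G' \le D$, while $[g,a] \in A$ because $A \trianglelefteq G$, so $[g,a] \in A \cap D = 1$. Hence $A \le Z(G)$, and $A$ is the desired characteristic central torsion-free subgroup of finite index. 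I expect the finiteness of $\mathrm{Hom}(C,C')$ --- equivalently, that the centre has finite index --- to be the real obstacle; the concluding centrality step, which might look as though it requires extra work to land inside $Z(G)$ rather than merely $Z(C)$, instead falls out immediately from $G' \le D$ together with $A \cap D = 1$.
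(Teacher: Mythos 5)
Your proof is correct, and while it shares the paper's skeleton---identify $D$ as the characteristic torsion subgroup with $G'\le D$ finite, pass to a characteristic finite-index centralizer $C$, exploit the bilinearity of the commutator on $C$, and use the finite-rank hypothesis to see that power subgroups have finite index---it reaches centrality in $G$ by a genuinely different route. The paper works with $C=C_G(G')$ and proves \emph{directly} that $B=C^{m}$ (with $m=|D|$) is central in all of $G$: writing $a=a_1^{m}\cdots a_t^{m}$ and using $[ab,g]=[a,g][b,g]$ for $a,b\in C$, it gets $[a,g]=[a_1,g]^{m}\cdots[a_t,g]^{m}=1$ because each $[a_i,g]$ lies in $G'\le D$; only afterwards does it pass to $A=B^{m}$ to kill torsion. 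You instead first bound $[C:Z(C)]$ via the embedding $C/Z(C)\hookrightarrow \mathrm{Hom}(C,C')$ and the finiteness of the bounded finite-rank group $C^{\mathrm{ab}}/(C^{\mathrm{ab}})^{e}$, then take $A=Z(C)^{e}$ to get torsion-freeness, and finally obtain centrality in $G$ for free from $[G,A]\le G'\cap A\le D\cap A=1$. The paper's version is a little more economical (one power computation yields centrality in $G$ at once, with no need to discuss $Z(C)$ or $\mathrm{Hom}$), whereas yours isolates the clean observation that, because $G'\le D$ is torsion, any torsion-free characteristic (hence normal) subgroup is automatically central---a step the paper instead pays for with the explicit commutator calculation. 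One cosmetic point: you use $e$ first for the exponent of $C'$ and later for $|D|$; since $\exp(C')$ divides $|D|$ this is harmless, but fixing $e=|D|$ throughout would be cleaner.
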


\begin{proof}

As $G^{\prime }$ is a characteristic subgroup of $G$, it is not
difficult to show that $C={C_{G}}(G^{\prime })$ is a characteristic subgroup
of $G$. Then $B={C^{m}}$ is a characteristic subgroup of $G$, where $
m=\left\vert D\right\vert $. Since $\left\vert {G^{\prime }}\right\vert
<\infty $ and $C={C_{G}}(G^{\prime })$, we can conclude that $\left\vert {G:C
}\right\vert <\infty $. As the group $G$ has finite rank, it is easy to note
that $\left\vert {C:{C^{m}}}\right\vert <\infty $ and hence we can conclude
that $\left\vert {G:B}\right\vert <\infty $. For any $a,b\in C$ and any $
g\in G$ we have $[ab,g]={[a,g]^{b}}[b,g]$ and, as $[a,g]\in G^{\prime }$ and 
$b\in C={C_{G}}(G^{\prime })$, we can conclude that $[ab,g]=[a,g][b,g]$.\par 

Any element $a\in B={C^{m}}$ may be presented in the form $a={a_{1}}^{m}...{
a_{t}}^{m}$, where ${a_{i}}\in C$ and $1\leqslant i\leqslant t$. Since $
[ab,g]=[a,g][b,g]$, for any $g\in G$ we have $[a,g]={[{a_{1}},g]^{m}}...{[{
a_{t}},g]^{m}}$. Then, as $m=\left\vert D\right\vert $ and $[{a_{i}},g]\in
G^{\prime }\leqslant D$, where $1\leqslant i\leqslant t$, we can conclude
that $[a,g]=1$. It implies that $B$ is a characteristic central subgroup of
finite index in $G$. Since the quotient group $G/D$ is torsion-free, so is $
B/(B\cap D)$ and it implies that $A={B^{m}}$ is a torsion-free
characteristic central subgroup of finite index in $G$.
\end{proof}

\section{ Culling ideals of group rings of finitely generated nilpotent
groups}

A commutative ring $R$ is Hilbert if every prime ideal in $R$ is an
intersection of some maximal ideals of $R$. A ring $R$ is absolutely Hilbert
if it is commutative, Noetherian, Hilbert and all field images of $R$ are
locally finite. Let $G$ be a group and let $K$ be a normal subgroup of $G$.
Let $R$ be a ring and let $I$ be a $G$-invariant ideal of the group ring $RK$
then ${I^\dag } = G\bigcap {(I + 1)} $ is a normal subgroup of $G$.
We say that a $G$-invariant ideal $I$ is $G$-large if $R/(R\cap I ) = k$ is
a field and $|K / I^\dag| < \infty $. Let $R$ be an absolutely Hilbert ring,
let $V$ be an $RK$-module and $U$ be a submodule of $V$. According to \cite
[Section 1]{Broo88} a $G$-invariant ideal $I$ of $RK$ culls $U$ in $V$ if\par

(a) $VI < V$,\par

(b) $VI + U = V$,\par

(c) $R/(R\cap I ) = k$ is a field,\par

(d) $(I \cap RC)RK = I$, where $C = cch_p(K)$ is the centralizer of chief $p$
-factors of $K$ and $p = chark$.\par

However, (c) holds for any $G$-large ideal of $RK$ and (d) holds for any
nilpotent group because in this case $K = cch_p(K)$. So, in the case where
the group $K$ is nilpotent, a $G$-invariant ideal $I$ of $RK$ culls a
submodule $U$ of $V$ if $I$ satisfies conditions (a), (b) and (c), and if the 
ideal $I$ is $G$-large then $I$ culls  $U$ in $V$ if $I$
satisfies conditions (a) and (b).\par

An $RK$-module $V$ is $G$-ideal critical if for any non-zero submodule $U$
of $V$ there is a $G$-invariant ideal $I$ of $RK$ which culls the submodule $
U$ in $V$.\par

\begin{lemma}
Let $G$ be a finitely generated nilpotent group and let $K$ be a
normal subgroup of $G$. Let $R$ be an absolutely Hilbert ring. Let $V$ be an 
$RK$-module, let $U$ be a non-zero submodule of $V$ and let $J$ be a $G$
-invariant ideal of $RK$ which culls $U$ in $V$. Then there is a $G$-large
ideal $I$ of $RK$ which culls $U$ in $V$ and such that $J \leq I$.
\end{lemma}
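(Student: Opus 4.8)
The plan is to pass to the quotient ring $S=RK/J$ and to build the required ideal $I$ as the $G$-core of the annihilator of a suitable simple module. First I would record which culling conditions come for free. Since $J\leq I$ forces $VJ\leq VI$, condition (b), $VI+U=V$, is inherited from $VJ+U=V$ as soon as $J\leq I$; condition (d) is automatic because $K$ is nilpotent, and (c) together with properness of $I$ will follow once I check $R\cap I=R\cap J$. Thus, recalling that a $G$-large ideal satisfying (a) and (b) culls $U$, it suffices to produce a $G$-invariant proper ideal $I\geq J$ with $VI<V$ and $|K/I^\dagger|<\infty$. Note that $R/(R\cap J)=k$ is a field image of the absolutely Hilbert ring $R$, hence $k$ is locally finite of some prime characteristic $p$; the image of $K$ in $S$ is the finitely generated nilpotent group $\bar K=K/J^\dagger$, and $S$ is a finitely generated $k$-algebra.

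For the construction I would set $W=V/VJ$, which is non-zero by (a) and finitely generated over the Noetherian ring $S$ (as $V$ is finitely generated and $RK$ is Noetherian). Being a non-zero finitely generated module over a Noetherian ring, $W$ has a simple quotient $M$. Here I would invoke the fact that, over the locally finite field $k$, every irreducible module of the group algebra of a polycyclic group is finite dimensional over $k$; hence $M$ is finite dimensional over $k$. Consequently the image of $K$ in $\operatorname{End}_k(M)\cong M_n(k)$ is a finitely generated subgroup of $GL_n(k)$, and since $k$ is locally finite this image is finite. Put $\mathfrak a=\operatorname{ann}_{RK}(M)$, a primitive ideal containing $J$, and let $I=\bigcap_{g\in G}\mathfrak a^{\,g}$ be its $G$-core.

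It remains to verify the four requirements for $I$. Being an intersection of the $G$-conjugates of $\mathfrak a$, the ideal $I$ is $G$-invariant and contains $J$. For the index $|K/I^\dagger|$: the image of $K$ in each $S/\mathfrak a^{\,g}$ is finite, and all these images are conjugate, hence of one and the same bounded order $N$; therefore the image of $K$ in $S/I\hookrightarrow\prod_{g}S/\mathfrak a^{\,g}$ is a finitely generated nilpotent group of exponent dividing $N$, so it is finite and $|K/I^\dagger|<\infty$. Since $k$ is central in $S$ and $M$ is simple, $k$ embeds into $\operatorname{End}_S(M)$, and as $R$ acts on $M$ through $k$ we obtain $R\cap\mathfrak a=R\cap J$, whence $R\cap I=R\cap J$; thus $R/(R\cap I)=k$ is a field and $I$ is proper, giving (c). Finally, writing $M=V/V'$ with $VJ\leq V'<V$ and using $I\leq\mathfrak a=\operatorname{ann}M$, we get $VI\leq V'$, so $VI<V$, which is (a); and (b) holds because $J\leq I$. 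Hence $I$ is $G$-large, culls $U$ in $V$, and satisfies $J\leq I$.

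I expect the crux to be the finiteness of the image of $K$ in $S/I$. Two points carry the weight: the input that irreducible modules of the polycyclic group algebra $k\bar K$ are finite dimensional when $k$ is locally finite (so that a single primitive quotient already makes the image of $K$ finite), and the passage from the primitive ideal $\mathfrak a$ to its $G$-invariant core $I$ without losing this finiteness, which is where the conjugacy of the quotients and the nilpotency of $K$ (forcing a finitely generated torsion nilpotent group to be finite) are essential. The remaining conditions (a)--(d) are comparatively soft, once one observes that enlargement preserves (b) and that properness of $I$ already secures (c).
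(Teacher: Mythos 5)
Your argument is essentially correct, but it is worth noting that the paper does not actually prove this lemma: its entire ``proof'' is a pointer to the second paragraph of Section~5 of Brookes' \emph{Modules over polycyclic groups}, and what you have written is, as far as I can tell, a faithful reconstruction of that outsourced argument rather than a new route. The skeleton --- observe that conditions (b), (c), (d) are soft once $J\leq I$ and $R\cap I=R\cap J$; pass to $W=V/VJ$ over $k[K/J^\dagger]$; take a simple quotient $M$; invoke the Hall--Roseblade theorem that irreducible modules of group algebras of polycyclic (here finitely generated nilpotent) groups over locally finite fields are finite dimensional, so that $K$ has finite image in $\operatorname{End}_k(M)$; and finally replace $\operatorname{ann}_{RK}(M)$ by its $G$-core, using that a finitely generated nilpotent group of finite exponent is finite to keep $|K/I^\dagger|<\infty$ --- is exactly the standard mechanism, and all of your verifications ($R\cap\mathfrak a=R\cap J$ via centrality of $k$ and simplicity of $M$, $VI\leq V'<V$ from $I\leq\operatorname{ann}(M)$, inheritance of (b)) check out. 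Two caveats. First, you tacitly assume $V$ is finitely generated when you assert that $W$ has a simple quotient; the lemma as stated imposes no such hypothesis, and for a general module over $k[K/J^\dagger]$ a simple quotient need not exist (already $k(t)$ over $k[t,t^{-1}]$ has none). This is harmless in context --- every application in the paper takes $V$ cyclic, and finite generation is a standing assumption in Brookes' setting --- but you should state it. Second, the Hall--Roseblade finite-dimensionality theorem is the real load-bearing input and deserves an explicit citation; it is not folklore-free. With those two remarks added, your write-up is a complete and self-contained proof of a step the paper leaves entirely to the reference.
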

\begin{proof}
It is sufficient to show that there is a $G$-invariant ideal $I$ of $
RK$ which culls $U$ in $V$ such that $J\leq I$, $R/(R\cap I)=k$ is a field
and $|K/I^{\dag }|<\infty $. This assertion was proved in the second
paragraph of section 5 of \cite{Broo88}.
\end{proof}

Let $\pi \subseteq\mathbb{N} $ be a set of prime integers, we will say that
a $G$-large ideal $I$ is $(\pi, G)$-large if $char(R/(R \cap I) \notin \pi $
. We will say that an $RK$-module $V$ is $(\pi, G)$-large critical if for
any non-zero submodule $U$ of $V$ there is a $(\pi, G)$-large ideal $I$ of $
RK$ which culls the submodule $U$ in $V$. Evidently, any $(\pi, G)$-large
critical $RK$-module is $G$-ideal critical.

\begin{lemma}
Let $G$ be a finitely generated nilpotent group and let $K$ be a
normal subgroup of $G$. Let $R$ be an absolutely Hilbert domain of
characteristic zero. Let $V$ be a $G$-ideal critical uniform $RK$-module
which is $R$-torsion-free. Then $V$ is $(\pi, G)$-large critical for any
finite set of prime integers $\pi \subseteq \mathbb{N}$.
\end{lemma}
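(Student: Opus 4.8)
The plan is to reduce everything to the $G$-ideal critical hypothesis by culling a carefully chosen multiple of the given submodule rather than the submodule itself, and to arrange the chosen multiple so that a culling ideal of ``wrong'' characteristic becomes impossible. Fix a non-zero submodule $U\le V$ and a finite set of primes $\pi$, and set $n=\prod_{p\in\pi}p$ (with $n=1$ when $\pi=\varnothing$). Since $R$ has characteristic zero, the positive integer $n$ is a non-zero element of $R$, and as $R$ is a domain and $V$ is $R$-torsion-free, multiplication by $n$ is an injective $RK$-endomorphism of $V$. Hence $Un$ is again a non-zero $RK$-submodule of $V$, and it will be $Un$, not $U$, that I feed into the critical hypothesis.

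First I would apply the assumption that $V$ is $G$-ideal critical to the non-zero submodule $Un$, obtaining a $G$-invariant ideal of $RK$ culling $Un$ in $V$; then I would invoke the previous lemma to enlarge it to a $G$-large ideal $I$ that still culls $Un$ in $V$. Thus $VI<V$ and $VI+Un=V$ by conditions (a) and (b), and $k=R/(R\cap I)$ is a field by (c). Because $R$ is absolutely Hilbert, $k$ is a locally finite field and therefore has positive characteristic $p=\mathrm{char}\,k$.

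The key step is to show $p\notin\pi$, and this is where the choice of $n$ pays off. Suppose to the contrary that $p\in\pi$. Identifying $\mathbb{Z}$ with the prime subring of $R$, the equality $\mathrm{char}\,k=p$ forces $p\in R\cap I\subseteq I$; since $p\mid n$ and $I$ is an ideal of $RK$, it follows that $n\in I$, whence $Un\subseteq Vn\subseteq VI$. But then $VI+Un=VI<V$, contradicting conditions (a) and (b) for $I$ culling $Un$. Hence $p\notin\pi$, so $I$ is $(\pi,G)$-large. I expect this little characteristic computation to be the only genuine obstacle: the whole point is to recognise that inflating $U$ to $Un$ makes every admissible culling characteristic avoid $\pi$ automatically.

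It remains to check that the same $I$ culls $U$ itself. Conditions (a) and (c) are inherited verbatim, and since $Un\subseteq U$ we get $V=VI+Un\subseteq VI+U\subseteq V$, so $VI+U=V$, which is (b). As $K$ is nilpotent (being a subgroup of the nilpotent group $G$), condition (d) is automatic and, $I$ being $G$-large, conditions (a) and (b) already guarantee that $I$ culls $U$ in $V$. Thus every non-zero submodule of $V$ is culled by a $(\pi,G)$-large ideal, so $V$ is $(\pi,G)$-large critical, as required. I note that this argument uses the torsion-freeness, the domain property, and characteristic zero of $R$, but it does not appear to invoke the uniformity of $V$ for this particular implication.
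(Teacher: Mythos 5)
Your proof is correct, and its core mechanism is the same as the paper's: cull an auxiliary submodule that sits inside $Vp$ for every $p\in\pi$, so that $\mathrm{char}\,k=p$ would force $Vp\leq VI$ and hence $VI=V$, contradicting condition (a). The difference lies in the choice of auxiliary submodule. The paper culls $X=(\cap_{i}Vp_i)\cap U$, needs the uniformity of $V$ to see that this intersection is non-zero, and then invokes Brookes's Lemma 6 to transfer the culling from $X$ to $U$ and to each $Vp_i$. You instead cull $Un$ with $n=\prod_{p\in\pi}p$, which is non-zero by $R$-torsion-freeness and $\mathrm{char}\,R=0$ alone; since $Un\subseteq U\cap(\cap_i Vp_i)$, the transfer to $U$ is immediate from the monotonicity of conditions (a) and (b) (which, for a $G$-large ideal over a nilpotent $K$, is all that culling requires), so the citation of Brookes's Lemma 6 becomes unnecessary. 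Your closing observation is accurate: the uniformity hypothesis is not needed for this implication, and your variant proves the slightly stronger statement without it. The only cosmetic caveat is that your remark that $k$ must have positive characteristic is not actually needed -- if $\mathrm{char}\,k=0$ it is vacuously outside $\pi$ -- but it does no harm.
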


\begin{proof}
Let $\pi = \{ {p_1},..., {p_k} \}$ be a finite set of prime integers
and let $U$ be a non-zero submodule of $V$. Put $X = ( \cap_{i = 1}^nV{p_i}
) \cap U$, as $char R = 0$ and $V$ is $R$-torsion-free, we see that $V{p_i}
\neq 0$ for all $1 \leq i \leq n$ and hence, as the module $V$ is uniform,
we can conclude that $X \neq 0$. Since $V$ is a $G$-ideal critical $RK$
-module, there is a $G$-invariant ideal $I$ of $RK$ which culls $X$ in $V$
and, by Lemma 4, we can assume that the ideal $I$ is $G$-large. By \cite[
Lemma 6]{Broo88}, the ideal $I$ culls $U$ and each $V{p_i}$, where, $1 \leq
i \leq n$. If $char(R/(R \cap I) = p_i$ for some $p_i \in \pi $ then $1p_i
\in I$, where $1$ is the unite of $R$. It implies that $V{p_i} \leq VI$ but
it is impossible because $I$ culls $V{p_i}$. Thus, $char(R/(R \cap I))
\notin \pi $.
\end{proof}

We should make some notes to apply Theorems E and D of \cite{Broo88} in the
next Proposition. A free abelian group $A$ of finite rank acted on by a
group $G$ is a $G$-plinth if all subgroups of finite index in $G$ act
rationally irreducibly on $A$. A polycyclic normal subgroup $K$ of a group $
G $ has a $G$-plinth series if it has $G$-invariant subgroups

\begin{equation}  \label{2}
1 = {K_1} \leq {K_2} \leq ... \leq {K_n} = K
\end{equation}

with each factor $K_{i+1}/K_{i}$ either finite or a $G$-plinth. It is easy
to note that if the group $G$ is nilpotent then any central series (\ref{2})
with cyclic factors is an $X$-plinth series for any subgroup $X$ of $G$.\par 

Let $R$ be an absolutely Hilbert ring. According to \cite[Section 2]{Broo88} 
an $RK$-module $V$ is $\mathcal{S}$-calibrated 
(with respect to the series (\ref{2})) if there are uniform 
$R{K_{i}}$-submodules such that\par 

(1) $V_1 \cong R/Q $ for some prime ideal $Q$ of $R$,\par 

(2) $V_i = V_1 RK_i$ for $1 \leq i \leq n $ and $V_n = V$,\par 

(3) either\par 

(i) $K_{i + 1}/K_i$ is infinite cyclic and $V_{i + 1} = V_i \otimes_{RK_i}
RK_{i + 1}$, or\par 

(ii) $u-dim_{RK_i} (V_{i + 1}) < \infty $ and $V_i$ is $K_{i + 1}$-essential
in $V_{i + 1}$, i.e. $V_i \cap Y \neq 0$ for any non-zero $RK_{i + 1}$
-submodule $Y \leq V_{i + 1}$.\par

Two $RK$-modules $V$ and $V_{1}$ are similar if their
injective hulls are isomorphic. If $V$ and $V_{1}$ are similar we denote $
V\approx V_1$. By \cite[Lemma 3.2(i)]{BBro85},  
$S_{G}(V)=\{g\in G|Vg\approx V\}$ is a subgroup of $G$ which is called the 
the similiser of the $RK$-module $V$ in $G$.\par 


\begin{proposition}
Let $K$ be a normal subgroup of a finitely generated
nilpotent group $G$. Let $R$ be an absolutely Hilbert domain of
characteristic zero. Let $W$ be a finitely generated $RK$-module
which is $R$-torsion-free. Then there is a cyclic uniform $RK$-submodule 
$aRK= V \leq W$ which is $(\pi, G)$-large critical for any finite set of
prime integers $\pi \subseteq \mathbb{N} $.
\end{proposition}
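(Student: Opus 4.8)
The plan is to reduce the assertion to Lemma 5 by producing a cyclic, uniform, $R$-torsion-free $RK$-submodule $V \leq W$ that is $G$-ideal critical; once such a $V$ is found, Lemma 5 immediately gives that $V$ is $(\pi, G)$-large critical for every finite set of primes $\pi$. To set the stage I would first note that, since $K$ is polycyclic and $R$ is Noetherian, the group ring $RK$ is Noetherian and $W$ is a Noetherian $RK$-module. I would then fix a central series $1 = K_1 \leq K_2 \leq \cdots \leq K_n = K$ of $K$ with cyclic factors; by the remark preceding the statement this is a $G$-plinth series, which is exactly the data needed to run Brookes' calibration machinery.

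Next I would apply Theorem E of \cite{Broo88} to the finitely generated module $W$ to extract a non-zero $\mathcal{S}$-calibrated $RK$-submodule $V \leq W$ with respect to the chosen plinth series, with calibrating submodules $V_1 \leq \cdots \leq V_n = V$. Condition (1) of calibration gives $V_1 \cong R/Q$ for a prime ideal $Q$ of $R$. Since $V \leq W$ is $R$-torsion-free and $R$ is a domain, $V_1$ is $R$-torsion-free, which forces $Q = 0$; hence $V_1 \cong R$ is cyclic, say $V_1 = aR$. Condition (2) then yields $V = V_n = V_1 RK = aRK$, so $V$ is cyclic, and uniformity of $V = V_n$ is built into the calibration data. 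Thus $V = aRK$ is the desired cyclic, uniform, $R$-torsion-free submodule of $W$.

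It remains to verify that $V$ is $G$-ideal critical, i.e. that every non-zero submodule $U \leq V$ is culled in $V$ by some $G$-invariant ideal of $RK$. This is where Theorem D of \cite{Broo88} enters: a calibrated module over the group ring of a finitely generated nilpotent group is ideal critical, the culling ideals being produced by the commutative-algebra analysis carried out factor by factor along the plinth series. Here I would use the observation recorded after the definition of culling, namely that for nilpotent $K$ one has $K = cch_p(K)$, so condition (d) is automatic and only conditions (a), (b), (c) must be checked when producing the culling ideals. Granting $G$-ideal criticality, an application of Lemma 5 to $V$ completes the argument.

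The main obstacle is the passage from calibration to $G$-ideal criticality: this is precisely the content imported from Brookes' Theorems D and E, and the delicate point is to match his hypotheses (the plinth series, the Noetherian and Hilbert conditions on the coefficient ring, and the calibration conditions (1)--(3)) to the present setting of an absolutely Hilbert domain of characteristic zero, while ensuring that the similiser-theoretic arguments underpinning those theorems remain valid. A secondary subtlety is confirming that the calibrated submodule furnished by Theorem E can be taken $R$-torsion-free, equivalently that $Q = 0$; this is what lets me identify $V_1$ with $R$ and conclude that $V$ is cyclic rather than merely cyclic over a proper quotient of $R$.
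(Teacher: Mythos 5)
Your overall strategy coincides with the paper's: obtain an $\mathcal{S}$-calibrated submodule via Theorem E of \cite{Broo88}, deduce $G$-ideal criticality from Theorem D, and finish with Lemma 5. The cyclicity argument ($V = V_1 RK$ with $V_1 \cong R/Q$ cyclic over $R$, so $V = aRK$) is also the paper's; your additional observation that $Q = 0$ by $R$-torsion-freeness is correct but not needed, since $R/Q$ is a cyclic $R$-module in any case, and uniformity of $V$ is indeed part of the calibration data.

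There is, however, a genuine gap at exactly the point you flag as the ``main obstacle'' and then leave unresolved: Theorem E of \cite{Broo88} does not apply to an arbitrary finitely generated $RK$-module $W$. Its hypotheses require a \emph{critical} module $U$ (critical in the sense of Krull dimension), and it produces a calibrated submodule with respect to an $S$-plinth series, where $S = S_G(U)$ is the similiser of $U$ --- not with respect to a $G$-plinth series. The paper bridges this as follows: $W$ is Noetherian, hence has Krull dimension, hence contains a critical submodule $U$, which is automatically uniform; one sets $S = S_G(U)$, so that $S = S_S(U)$; and --- this is where nilpotency of $G$ enters --- a central series of $K$ with cyclic factors centralized by $G$ is an $X$-plinth series for \emph{every} subgroup $X \leq G$, so one and the same series serves simultaneously as the $S$-plinth series required by Theorem E and the $G$-plinth series required by Theorem D. Your proposal omits the passage to a critical submodule and the similiser entirely, and therefore never reconciles the two plinth-series requirements; as written, the invocation of Theorem E is not licensed by its hypotheses, and this is precisely the step where the paper does the real work.
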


\begin{proof}
By \cite[Theorem 3.7]{Wehr09}, $W$ is a Noetherian $RK$-module and
hence, by \cite[Chap.6, Lemma 2.3]{McC}, $W$ has Krull dimension. Then, by 
\cite[Chap.6, Lemma 2.10]{McC}, $W$ has a critical submodule $U$ which, by 
\cite[Chap.6, Lemma 2.13]{McC}, is uniform. Let $S=S_{G}(U)$ then $
S=S_{S}(U) $. As the group $G$ is finitely generated nilpotent, it has a
central series with cyclic factors and hence the subgroup $K$ has a finite
series (\ref{2}) with cyclic factors which are centralized by the group $G$.
Then, as it was mentioned above, series (\ref{2}) may be considered as an $X$
-plinth series for any subgroup $X\leq G$. So, we can assume that (\ref{2})
is an $S$-plinth series. Therefore, by \cite[Theorem E]{Broo88}, $U$ has an 
$\mathcal{S}$-calibrated submodule $V$ with respect to the $S$-plinth series
( \ref{2}). However, (\ref{2}) is also a $G$-plinth series. Then, by 
\cite[Theorem D]{Broo88}, the module $V$ is $G$-ideal critical. 
By the definition of an $\mathcal{S}$-calibrated submodule,   $V=V_1 RK$, 
where $V_1 \cong R/Q$. It implies that $V_1 = aR$, where $a\in V_1$ and 
$Ann_R(a)=Q$, and hence $V=aRK$, i.e. the $RK$-module $V$ is cyclic.  
Then, as the module $V$ is $G$-ideal critical, the assertion follows 
from Lemma 5. 
\end{proof}

\section{ Commutative invariants for modules over finitely generated
nilpotent groups}

Let $P$ be a prime ideal of a commutative ring $S$ and let $S_{P}$ be the
localization of $S$ at the ideal $P$. Let $M$ be an $S$-module, the support $
Supp_{S}M$ of the module $M$ consists of all prime ideals $P$ of $S$ such
that $M_{P}=M\otimes_{S}S_{P}\neq 0$ (see. \cite[Chap. II,\S 4.4]{Bour}).\par

By \cite[Chap. IV, \S 1.4, Theorem 2]{Bour}, if $S$ and $M$ are Noetherian 
then the set $\mu_{S}(M)$ of minimal elements of $Supp_{S}M$ is finite and 
coincides with $\mu_{S}(Ann_{S}M)$, where $\mu_{S}(Ann_{S}M)$ is the 
set of prime ideals of $S $ which are minimal over $Ann_{S}M$, i.e. we have 

\begin{equation}  \label{3}
\mu_{S}(M)=\mu_{S}(Ann_{S}M)
\end{equation}

Further in the paper, if $J$ is an ideal of $S$ then $\mu_{S}(J)$ will 
denote the set of minimal prime ideals over $J$.\par

Let $G$ be a nilpotent group and let $K$ be a normal subgroup of $G$ such 
that the quotient group $G/K$ is finitely generated free abelian. Let $R$ 
be an absolutely Hilbert ring, if $I$ is a $G$-large ideal of the group 
ring $RK$ then the derived subgroup of the quotient group $G/I^{\dag }$ 
is finite and it follows from Lemma 3 that $G/I^{\dag }$ has a 
characteristic central subgroup $A$ of 
finite index. Since the quotient group $G/K$ is finitely generated free 
abelian, the subgroup $A$ may be chosen finitely generated free abelian.
Further in the paper, $A$ will denote a finitely generated free abelian central 
subgroup of $G/I^{\dag }$. Put $k=R/(R\cap I)$, by \cite[ Theorem 3.7]{Wehr09}, 
the group ring $kA$ is Noetherian and, by \cite[Chap.1, Lemma 1.1]{Pass89}, 
$kA$ is an integral domain. \par 

Let $W$ be a finitely generated $RG$-module then $W/WI$ is a finitely 
generated $kA$-module and hence $W/WI$ is a Noetherian 
$kA$-module. So, the finite set $\mu_{kA}(W/WI)=\mu_{kA}(Ann_{kA}(W/WI))$ 
is defined.\par

Further in the paper, for any $RG$-module $W$ we consider $W/WI$ as a 
$kA$-module, where $I$ is a $G$-large ideal of $RK$, $A$ is a
central free abelian subgroup of finite index in $G/I^{\dag }$ and  
$k=R/(R\cap I)$.\par

\begin{lemma}
 Let $G$ be a finitely generated nilpotent group and let $K$ be a
normal subgroup of $G$ such that the quotient group $G/K$ is free abelian.
Let $R$ be an absolutely Hilbert domain and let $W$ be a finitely generated 
$RG$-module. Let $I$ be a $G$-large ideal of $RK$, let $A$ be a central free 
abelian subgroup of finite index in $G/I^{\dag}$ and let $k=R/(R\cap I)$. 
Then for any $RG$-submodule $W_{1}\leq W$:\par

(i) $W_{1}/W_{1}I$ has a finite series each of whose quotient is isomorphic
to some section of $W/WI$;\par

(ii) $Supp_{kA}W_{1}/W_{1}I\subseteq Supp_{kA}W/WI$;\par

\end{lemma}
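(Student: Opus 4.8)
The plan is to deduce (ii) from (i) and to concentrate the work on (i). For (i), I first reduce the short exact sequence $0\to W_1\to W\to W/W_1\to 0$ of $RG$-modules modulo $I$. Since $-\otimes_{RK}(RK/I)$ is right exact and $M/MI=M\otimes_{RK}(RK/I)$, the natural map $f\colon W_1/W_1I\to W/WI$ has image $(W_1+WI)/WI$, a submodule of $W/WI$, and kernel $(W_1\cap WI)/W_1I$. The image is already a section of $W/WI$, so the entire difficulty lies in exhibiting a finite series of the kernel $N=(W_1\cap WI)/W_1I$ whose factors are sections of $W/WI$. Note first that $N$ is annihilated by $I$ (because $(W_1\cap WI)I\subseteq W_1I$), hence $N$, like $W/WI$, is a module over $\bar{S}=RK/I$ and, by restriction through the central subgroup $A$, a $kA$-module.

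To filter $N$, I would use the $I$-adic filtration of $W$. Put $D_n=(W_1\cap WI^n)+W_1I$ for $n\geq 1$, so that $D_1=W_1\cap WI$ and $N=D_1/W_1I$. The factor $D_n/D_{n+1}$ is a subquotient of $WI^n/WI^{n+1}$, via the inclusion $W_1\cap WI^n\hookrightarrow WI^n$ (using $(W_1\cap WI^n)\cap WI^{n+1}=W_1\cap WI^{n+1}$). I then analyse the graded pieces $WI^n/WI^{n+1}$. Each is a quotient of $W\otimes_{RK}(I^n/I^{n+1})$; since $RK$ is Noetherian, $I$ is finitely generated, and because $\bar{S}=RK/I$ is a finite-dimensional $k$-algebra (it is a homomorphic image of $k[K/I^{\dagger}]$ with $|K/I^{\dagger}|<\infty$), the $(\bar{S},\bar{S})$-bimodule $I^n/I^{n+1}$ is finite-dimensional over $k$. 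As $I$ kills $I^n/I^{n+1}$ on the left, $W\otimes_{RK}(I^n/I^{n+1})=(W/WI)\otimes_{\bar{S}}(I^n/I^{n+1})$.

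The key point is now that $(W/WI)\otimes_{\bar{S}}T$ is a $kA$-quotient of $W/WI$ for every cyclic left $\bar{S}$-module $T$: choosing a surjection $\bar{S}\twoheadrightarrow T$ and tensoring on the left with $W/WI$ yields a surjection $W/WI=(W/WI)\otimes_{\bar{S}}\bar{S}\twoheadrightarrow(W/WI)\otimes_{\bar{S}}T$, which is $A$-equivariant because $A$ is central in $G/I^{\dagger}$ and so its action on the left factor commutes with that of $\bar{S}$. Filtering the finite-dimensional left $\bar{S}$-module $I^n/I^{n+1}$ by submodules with cyclic (indeed simple) factors and using right-exactness of $(W/WI)\otimes_{\bar{S}}-$, I obtain a finite series of $WI^n/WI^{n+1}$ whose factors are $kA$-quotients of $W/WI$. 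Hence each $D_n/D_{n+1}$, being a subquotient of $WI^n/WI^{n+1}$, has a finite series with factors that are sections of $W/WI$.

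It remains to make the filtration $D_1\supseteq D_2\supseteq\cdots$ finite, and this is the main obstacle. What is needed is a single $N_0$ with $W_1\cap WI^{N_0}\subseteq W_1I$, so that $D_{N_0}=W_1I$ and $N=D_1/W_1I$ acquires the finite series $D_1/W_1I\supseteq\cdots\supseteq D_{N_0}/W_1I=0$. This is an Artin--Rees statement for the $G$-invariant two-sided ideal $\mathfrak{I}=IRG$ in the Noetherian ring $RG$ acting on the finitely generated module $W$ with submodule $W_1$ (note $WI^n=W\mathfrak{I}^n$ and $W_1I=W_1\mathfrak{I}$): an Artin--Rees number $c$ gives $W_1\cap W\mathfrak{I}^n=(W_1\cap W\mathfrak{I}^c)\mathfrak{I}^{n-c}\subseteq W_1\mathfrak{I}=W_1I$ as soon as $n\geq c+1$. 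Establishing this Artin--Rees property for $\mathfrak{I}$ in $RG$ is the crux; once it is in hand, (i) follows by splicing the series for the kernel with the submodule $(W_1+WI)/WI$ of $W/WI$. Finally, (ii) is immediate: for finitely generated modules over the Noetherian domain $kA$ the invariant $Supp_{kA}$ is additive along short exact sequences and monotone under passage to sections, so a finite series of $W_1/W_1I$ with factors that are sections of $W/WI$ forces $Supp_{kA}(W_1/W_1I)\subseteq Supp_{kA}(W/WI)$.
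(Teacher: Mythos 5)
Your skeleton --- split off the image $(W_1+WI)/WI$, control the kernel $(W_1\cap WI)/W_1I$ by an Artin--Rees statement for $\mathfrak I=IRG$, and then filter the graded pieces $WI^n/WI^{n+1}$ --- is essentially the skeleton of the paper's argument, but the two load-bearing steps are not actually secured, and one of them is carried out in a way that does not work.

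First, the Artin--Rees statement $W_1\cap WI^{n}\leq W_1I$ for some $n$, which you yourself flag as ``the crux'', is left unproved. This is precisely where the paper invokes \cite[Chap.~11, Theorems 3.12 and 2.9]{Pass89}: since $G$ is nilpotent, the ideal $J=IRG$ generated by the $G$-invariant ideal $I$ is \emph{polycentral}, and polycentral ideals in Noetherian group rings satisfy the Artin--Rees property. Without this input your filtration $D_1\supseteq D_2\supseteq\cdots$ never terminates and (i) is not established.

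Second, and more seriously, your treatment of the graded pieces is flawed. The surjection $(W/WI)\otimes_{\bar S}(I^n/I^{n+1})\to WI^n/WI^{n+1}$, $w\otimes x\mapsto wx$ (where $\bar S=RK/I$), intertwines the $A$-action on the target with the \emph{diagonal} action on the source, in which $A$ acts on $I^n/I^{n+1}$ by conjugation --- not with the action on the first factor alone. Centrality of $A$ in $G/I^{\dag}$ forces conjugation by $A$ to be trivial on $K/I^{\dag}$, but not on $I^n/I^{n+1}$: for the Heisenberg group $G=\langle x,y\rangle$ with $z=[y,x]$ central, $K=\langle y,z\rangle$ and $u=y^m-1\in I$, one computes $x^{-1}ux-u=y^m(z^m-1)\equiv z^m-1\pmod{I^2}$, which need not vanish in $I/I^2$. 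Consequently a filtration of $I^n/I^{n+1}$ by left $\bar S$-submodules $T_j$ with cyclic factors neither yields $A$-stable subgroups of $WI^n/WI^{n+1}$ (since $a^{-1}T_ja\not\subseteq T_j+I^{n+1}$ in general) nor identifies the factors with $kA$-quotients of $W/WI$, so the key claim of your third paragraph fails. The paper sidesteps this entirely by exploiting the polycentral generating set $\{a_1,\dots,a_r\}$ of $J=IRG$: since $a_1$ is central in $RG$, right multiplication by $a_1$ on $W/WJ^2$ is an $RG$-module endomorphism with kernel containing $WJ$, so $Wa_1$ is an honest $RG$-section (hence $kA$-section) of $W/WJ$, and induction on $r$ and on the power of $I$ completes (i). Your deduction of (ii) from (i) via additivity and monotonicity of $Supp_{kA}$ is correct and agrees with the paper.
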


\begin{proof}

(i). By \cite[Chap.11, Theorem 2.9]{Pass89}, $WI^{n}\cap W_{1}\leq W_{1}I$
for some $n\in \mathbb{N}$. It implies that $W_{1}/W_{1}I$ has a finite
series each of whose factor is isomorphic to some section of $
WI^{k-1}/WI^{k} $, where $1\leq k\leq n$. Then it is sufficient to show that
each section $WI^{k-1}/WI^{k}$ has a finite series each of whose factor is
isomorphic to some section of $W/WI$. Applying induction on $k$, it is
sufficient to show that $WI/WI^{2}$ has a finite series each of whose factor
is isomorphic to some section of $W/WI$. As the ideal $I$ is $G$-invariant, $
J=IRG$ is an ideal of $RG$. Evidently, $WI^{k-1}/WI^{k}=WJ^{k-1}/WJ^{k}$ for
all $1\leq k\leq n$. By \cite[Chap.11, Theorem 3.12]{Pass89}, the ideal $J=IRG$
is polycentral, i.e. $J$ has generating elements $\{a_{1},a_{2},...,a_{r}\}$
such that $a_{i}+J_{i-1}$ is a central element of $RG/J_{i-1}$, where $
J_{i-1} $ is the ideal of $J$ generated by $\{a_{1},...,a_{i-1}\}$. As it
was mentioned above, it is sufficient to show that $WJ/WJ^{2}$ has a finite
series each of whose factor is isomorphic to some section of $W/WJ$. We
apply induction on the number of the generating elements 
$\{a_{1},a_{2},...,a_{r}\}$. Passing to the quotient module $W/WJ^{2}$ we can
assume that $WJ^{2}=0$. The mapping $\varphi:W\rightarrow W$ given by 
$\varphi :x\mapsto xa_{1}$ is an endomorphism such that $WJ\leq Ker\varphi$. 
Therefore, the submodule $\varphi (W)=Wa_{1}$ of $WJ$ is isomorphic to the 
section $W/Ker\varphi$ of $W/WJ$. Thus, passing to the quotient module 
$W/Wa_{1}$ we can assume that $Wa_{1}=0$. Then we can pass to the ideal 
$J/J_1$ of $RG/J_1$   and the assertion follows from the induction hypothesis.\par 

(ii) The assertion follows from (i) and \cite[Chap.II, \S 4, Proposition 16]
{Bour}.\par 

\end{proof}


\begin{proposition}
Let $G$ be a finitely generated nilpotent group and let $K$
be a normal subgroup of $G$ such that the quotient group $G/K$ is free
abelian. Let $R$ be an absolutely Hilbert domain, let $F$ be the field 
of fractions of $R$  and let $M$ be an irreducible $FG$-module. 
Let $I$ be a $G$-large ideal of $RK$, let $A$ 
be a central free abelian subgroup of finite index in $G/I^{\dag }$ and let 
$k=R/(R\cap I)$. Then for any non-zero elements $a,b\in M$:\par

(i) $aRG$ contains a submodule which is isomorphic to $bRG$;\par 

(ii)  $\mu_{kA}((akG)/(akG)I)=\mu_{kA}((bkG)/(bkG)I)$;\par

(iii) if $(akG)/(akG)I{\neq} 0$ then $(bkG)/(bkG)I{\neq} 0$; \par 
(iv) if $(akG)/(akG)I$ is $kA$-torsion then $(bkG)/(bkG)I$ is 
$kA$-torsion.

\end{proposition}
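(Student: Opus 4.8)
The plan is to exploit the irreducibility of $M$ as a (right) $FG$-module together with the fact that $F$ is the field of fractions of $R$, and then to feed the resulting comparison into Lemma 6. For (i), since $a\neq 0$ the submodule $aFG$ is nonzero, so $aFG=M=bFG$; in particular $b\in aFG$, say $b=a\sum_i (r_i/s)g_i$ with $g_i\in G$, $r_i\in R$ and a common denominator $s\in R\setminus\{0\}$. Clearing the denominator gives $sb=a\sum_i r_ig_i\in aRG$. Because $M$ is an $F$-vector space it is $R$-torsion-free, so multiplication by the central element $s$, namely $\phi:x\mapsto sx$, is an injective $RG$-endomorphism of $M$; it therefore restricts to an $RG$-isomorphism $bRG\xrightarrow{\ \sim\ }sbRG$ with image contained in $aRG$, which is (i).

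For (ii)--(iv) I would first identify, for $x\in M$, the module $(xkG)/(xkG)I$ of the statement with $(xRG)/(xRG)I$: since $R\cap I\subseteq I$ one has $(xRG)(R\cap I)\subseteq (xRG)I$, so the quotient factors through $xRG\otimes_R k=xkG$ and the two expressions describe the same finitely generated $kA$-module, which I denote $\overline{x}$. The isomorphism $bRG\cong sbRG$ from (i), being multiplication by the central element $s$, carries $(bRG)I$ onto $(sbRG)I$ and hence descends to a $kA$-isomorphism $\overline{b}\cong\overline{sb}$. Now I would apply Lemma 6(ii) with $W=aRG$ (a cyclic, hence finitely generated, $RG$-module) and the submodule $W_1=sbRG\leq W$, obtaining $Supp_{kA}\overline{sb}\subseteq Supp_{kA}\overline{a}$, that is $Supp_{kA}\overline{b}\subseteq Supp_{kA}\overline{a}$.

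The same argument with the roles of $a$ and $b$ interchanged---using $t\in R\setminus\{0\}$ with $ta\in bRG$ furnished by (i)---gives the reverse inclusion, so $Supp_{kA}\overline{a}=Supp_{kA}\overline{b}$. Equal sets have equal minimal elements, whence $\mu_{kA}(\overline{a})=\mu_{kA}(\overline{b})$, which is (ii). For (iii) I would use that a finitely generated module over the Noetherian ring $kA$ is nonzero precisely when its support is nonempty, so $\overline{a}\neq 0\iff \overline{b}\neq 0$. For (iv) I would use that, over the integral domain $kA$, a finitely generated module is $kA$-torsion precisely when the zero ideal $(0)$ does not lie in its support; equality of supports then yields that $\overline{a}$ is torsion if and only if $\overline{b}$ is.

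The conceptual point is that irreducibility over $F$ forces any two nonzero cyclic $RG$-submodules of $M$ to be commensurable---each contains a nonzero scalar multiple of the other---and Lemma 6 converts this commensurability into an equality of $kA$-supports, from which the minimal-prime, nonvanishing, and torsion statements all follow. The only steps requiring genuine care are checking that the scalar isomorphism is compatible with the reduction modulo $I$ (so that the support comparison via Lemma 6 is legitimate) and recasting the nonvanishing and torsion conditions in the support-theoretic language over the domain $kA$; everything else is the symmetric reuse of Lemma 6.
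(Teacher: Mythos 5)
Your proof is correct. Parts (i) and (ii) follow the paper's own argument almost verbatim: irreducibility gives $b=ac$ with $c\in FG$, a common denominator $s\in R$ gives $sb\in aRG$ with $bRG\cong sbRG$ (you justify the isomorphism via $R$-torsion-freeness of the $F$-vector space $M$, which the paper leaves implicit), and then Lemma~6(ii) applied symmetrically yields $Supp_{kA}\bigl((aRG)/(aRG)I\bigr)=Supp_{kA}\bigl((bRG)/(bRG)I\bigr)$ and hence equality of the sets of minimal elements. Your identification of $(xkG)/(xkG)I$ with $(xRG)/(xRG)I$ as a $kA$-module is also the reading the paper's proof uses. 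Where you diverge is in (iii) and (iv): the paper proves (iii) directly from the Artin--Rees property of the polycentral ideal $IRG$ (citing Passman, Chap.~11, Theorem~2.9, to get $(bRG)I^{n}\cap(aRG)\leq(aRG)I$) and proves (iv) from Lemma~6(i) (a finite series with quotients isomorphic to sections of $(aRG)/(aRG)I$, plus $kA$ being a domain), whereas you deduce both from the support equality already established in (ii), using that a finitely generated $kA$-module is nonzero iff its support is nonempty and is $kA$-torsion iff $(0)$ is not in its support. Both routes are valid; yours is more economical in that one application of Lemma~6(ii) in each direction carries all three of (ii)--(iv), while the paper's treatment of (iii) and (iv) does not presuppose (ii) and so exposes the underlying Artin--Rees and filtration mechanisms more explicitly.
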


\begin{proof}
(i)  As the module $M$ is irreducible, there is $c\in FG$ 
such that $ac=b$. As $F$ is the field of fractions of the domain $R$, 
there is $d\in R$ such that $cd{\in} RG$. Therefore, $bd=acd\in aRG$. 
As $d\in R$ we see that $RG$-modules $bdRG$ and $bRG$ are isomorphic 
and the assertion follows.\par 

(ii) By (i), we can assume that  $bRG\leq aRG$. Then it follows from Lemma 6(ii) 
that  $Supp_{kA}(bRG)/(bRG)I\subseteq Supp_{kA}  (aRG)/(aRG)I$. However, 
the arguments of (i) show that we can assume that  $aRG\leq bRG$. Therefore, 
$Supp_{kA}(aRG)/(aRG)I\subseteq Supp_{kA} (bRG)/(bRG)I$ and hence 
$Supp_{kA}(aRG)/(aRG)I = Supp_{kA} (bRG)/(bRG)I$. It implies that 
$\mu_{kA}((akG)/(akG)I)=\mu_{kA}((bkG)/(bkG)I)$. \par

(iii) By (i), we can assume that  $aRG\leq bRG$. By \cite[Chap.11, Theorem 2.9]
{Pass89}, $(bRG){I}^{n}\cap (aRG)\leq (aRG)I$ for some $n\in \mathbb{N}$ and, 
as $(akG)/(akG)I{\neq} 0$, it implies that $(bRG){I}\neq (bRG)$, i.e. $(bkG)/(bkG)I$.\par

(iv) By (i), we can assume that  $bRG\leq aRG$. Then it follows from Lemma 6(i) 
that $(bRG)/(bRG)I$ has a finite series each of whose quotient is isomorphic
to some section of $(aRG)/(aRG)I$. Since the $kA$-module $(aRG)/(aRG)I$ is 
$kA$-torsion, it implies that $(bRG)/(bRG)I$ has a finite series each of whose 
quotient is $kA$-torsion. Then, as $kA$ is a domain, 
we can conclude that  $(bRG)/(bRG)I$ is $kA$-torsion. 
\end{proof}

  
\begin{proposition}
 Let $G$ be a finitely generated nilpotent group and let $
H\leq G$. Let $R$ be an absolutely Hilbert domain and let $0\neq W$ be an $
RG$-module. Let $K$ be a normal subgroup of $G$ such that $K\leq H$. Let $I$
be a $G$-large ideal of $RK$, let $k=R/(I\cap R),$ let $A$ be a free
abelian central subgroup of finite index in $G/I^{\dag }$ and ${B}=({H}/{\
I^{\dag }})\cap A$. Let $0\neq {d}\in W$ such that $({d}RG)/(dRG)I\neq 0$, 
let ${w}$ be the image of $d$ in $({d}RG)/({d}RG)I$ and let $J=Ann_{kA}(w)$. 
Suppose that ${d}RG={d}RH\otimes_{RH}RG$. Then:\par 

(i) $J=(J\cap kB)kA$;\par 

(ii) $\mu_{kA}(({d}RG)/(dRG)I)=\mu_{kA}(J)$;\par 

(iii) if $({d}RG)/(dRG)I\neq 0$ and $({d}RG)/(dRG)I$ is $kA$-torsion then 
$\mu_{kA}(J)$ consists of proper ideals of $kA$.
\end{proposition}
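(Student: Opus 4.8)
The plan is to exploit the hypothesis $dRG=dRH\otimes_{RH}RG$ to obtain a direct sum decomposition of the $kA$-module $M=(dRG)/(dRG)I$, from which all three parts follow. Since $dRG=\oplus_{t\in T}(dRH)t$ for a right transversal $T$ to $H$ in $G$ (chosen with $1\in T$), and since $I$ is $G$-invariant, so that $tIt^{-1}=I$, I would first check that $(dRG)I=\oplus_{t\in T}(dRH)I\,t$ and hence
\[
M=(dRG)/(dRG)I=\bigoplus_{t\in T}M_1 t,\qquad M_1=(dRH)/(dRH)I,
\]
where $M_1$ is the component indexed by $t=1$ and $w\in M_1$. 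Because $I^{\dag}\le K\le H$ acts trivially modulo $I$, this is a decomposition of $kA$-modules once one records how $A$ moves the components: a lift $g_\alpha\in G$ of $\alpha\in A\le G/I^{\dag}$ sends $M_1$ into the component indexed by the right coset $Hg_\alpha$, and, since $I^{\dag}\le H$, one has $g_\alpha\in H$ exactly when $\alpha\in(H/I^{\dag})\cap A=B$; in that case $M_1 g_\alpha=M_1$. This identifies $B$ with the stabiliser of the base component $M_1$ and gives an injection $A/B\hookrightarrow H\backslash G$.

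For (i) I would fix a transversal $\{s_j\}$ of $B$ in $A$, so that $kA=\oplus_j kB\,s_j$, and write an arbitrary $x\in kA$ as $x=\sum_j b_j s_j$ with $b_j\in kB$. Each $b_j\in kB$ keeps $w$ inside $M_1$, while the distinct $s_j$ carry $M_1$ isomorphically into pairwise distinct components $M_1 t_j$ (distinctness being exactly the injectivity of $A/B\hookrightarrow H\backslash G$); hence the summands $(wb_j)s_j$ lie in a direct sum. Therefore $wx=0$ if and only if $wb_j=0$ for every $j$, i.e. if and only if each $b_j\in\mathrm{Ann}_{kB}(w)=J\cap kB$. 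This says precisely that $x\in(J\cap kB)kA$, so $J=(J\cap kB)kA$; the reverse inclusion $(J\cap kB)kA\subseteq J$ is immediate since $J$ is an ideal.

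For (ii) the induced structure is not even needed: writing $M=wk[G/I^{\dag}]$ and choosing a transversal $g_1,\dots,g_r$ of $A$ in $G/I^{\dag}$ (finite, as $|G/I^{\dag}:A|<\infty$), centrality of $A$ gives $M=\sum_i (wg_i)kA$ with $(wg_i)x=(wx)g_i$ for $x\in kA$, whence $\mathrm{Ann}_{kA}(wg_i)=\mathrm{Ann}_{kA}(w)=J$ for every $i$. Consequently $\mathrm{Ann}_{kA}(M)=\cap_i\mathrm{Ann}_{kA}(wg_i)=J$, and (\ref{3}) yields $\mu_{kA}(M)=\mu_{kA}(\mathrm{Ann}_{kA}M)=\mu_{kA}(J)$, which is the claim. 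For (iii) I would observe that $M$ is a finitely generated $kA$-module (it is cyclic over $k[G/I^{\dag}]$, which is finite over $kA$); hence if $M\neq 0$ is $kA$-torsion over the domain $kA$ then $J=\mathrm{Ann}_{kA}(M)\neq 0$, so every prime minimal over $J$ is nonzero and $\mu_{kA}(J)$ consists of proper (nonzero) prime ideals.

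I expect the main obstacle to be the careful verification underlying (i): establishing the direct-sum decomposition of $M$ (in particular that $(dRG)I$ splits along $T$, which rests on the $G$-invariance $tIt^{-1}=I$) and correctly identifying $B$ with the stabiliser of the base component, so that the transversal $\{s_j\}$ lands in pairwise distinct summands. Once this bookkeeping is in place the annihilator computation, and with it (i), follows formally, and parts (ii) and (iii) are then short.
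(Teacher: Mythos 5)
Your proposal is correct and follows essentially the same route as the paper: part (i) via the decomposition $dRG=\oplus_{t\in T}(dRH)t$ descending to $wkA=\oplus_{s}(wkB)s$, part (ii) via $Ann_{kA}(wk(G/I^{\dag}))=Ann_{kA}(w)$ using centrality and finite index of $A$ together with equation (\ref{3}), and part (iii) via the observation that a nonzero finitely generated torsion module over the domain $kA$ has nonzero, proper annihilator. You merely make explicit some verifications the paper leaves implicit (the $G$-invariance of $I$ behind the splitting of $(dRG)I$, and the identification of $B$ as the stabiliser of the base component).
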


\begin{proof}

(i) Since ${d}RG={d}RH\otimes_{RH}RG$, by \cite[Chap. 2, Lemma 1.1]{Karp90}, 
we have ${d}RG=\oplus_{t\in T}({d}RH)t$, where $T$ is a right transversal 
to the subgroup $H$ in $G$. As $K\leq H$, we see that ${d}RGI=\oplus_{t\in T}({d}RHI)t$ 
and hence ${d}RG/{d}RGI=\oplus_{t\in T}({d}RH/{d}RHI)t$. It implies that ${w}RG=\oplus
_{t\in T}({w}RH)t$, where where ${w}$ is the image of $d$ in ${d}RG/{d}RGI$.
Then, as $I^{\dag }$ centralizes ${d}RG/{d}RGI$ and $R\cap I$ annihilates ${
d }RG/{d}RGI$, we have ${w}k(G/I^{\dag })=\oplus_ {t\in T} (wk(H/I^{\dag }))t$, 
where $T$ is a right transversal to $H/I^{\dag }$ in $G/I^{\dag }$.
Therefore, ${w}kA=\oplus_ {s\in S} (wkB)s$, where $S$ is a right transversal
to $B$ in $A$, and it implies that $Ann_{kA}(w)=J=(J\cap kB)kA$.\par

(ii) As  $I^{\dag }$ centralizes ${d}RG/{d}RGI$ and $R\cap I$ 
annihilates ${d}RG/{d}RGI$, we see that $({d}RG)/(dRG)I\cong{w}k(G/I^{\dag })$ 
and therefore $\mu_{kA}(({d}RG)/(dRG)I) = \mu_{kA}({w}k(G/I^{\dag }))$. 
Since $A$ is a central subgroup of finite index in $G/I^{\dag }$ it is not
difficult to show that $Ann_{kA}({w_{i}}k(G/I^{\dag }))= Ann_{kA}({w_{i}})$ 
and it implies that $\mu _{kA}(Ann_{kA}({w_{i}}k(G/I^{\dag })))= \mu
_{kA}(Ann_{kA}({w_{i}}))=\mu _{kA}(J)$. According to Equation (\ref{3}) 
we have $\mu _{kA}({w} k(G/I^{\dag }))=\mu _{kA}(Ann_{kA}({w_{i}}k(G/I^{\dag })))$  
and therefore, as $\mu_{kA}(({d}RG)/(dRG)I) = \mu_{kA}({w}k(G/I^{\dag }))$, we 
can conclude that $\mu_{kA}(({d}RG)/(dRG)I)=\mu_{kA}(J)$.\par

(iii) Put $U=({d}RG)/(dRG)I\neq 0$. According to (ii) and Equation (\ref{3}) 
it is sufficient to show that $\mu _{kA}(Ann_{kA}({U)})=\mu _{kA}(U)$ 
consists of proper ideals of $kA$. But, as the $kA$-module $U$ is finitely 
generated and $kA$ is a domain, $0\in \mu_{kA}(Ann_{kA}({U)})$ if and 
only if ${U}$ is not $kA$-torsion, and $kA\in \mu_{kA}(Ann_{kA}({U)})$ 
if and only if $U=0$. So, the assertion follows.
\end{proof}


\begin{lemma}
 Let $A$ be a finitely generated abelian group, let $k$ be a
field and let $I$ be an ideal of $kA$. Then for any subgroup $B$ of
finite index in $A$ the following assertions hold:\par

(i) for any ${Q\in \mu_{kB}}(I\cap kB)$ there is a prime ideal $P$ of $kA$
such that $I\leq P$, $Q=P\cap kB$ and any such prime $P$ is necessarily in ${
\mu_{kA}}(I)$; \par

(ii) if $P\in {\mu_{kA}}(I)$ then $P\cap kB\notin {\mu_{kB}}(I\cap kB)$ if
and only if there is $S\in {\mu_{kA}}(I)$ such that $S\cap kB<P\cap kB$;\par 

(iii) if $J$ is an ideal of $kA$ such that $\mu_{kA}(I)=\mu_{kA}(J)$ then $
\mu_{kB}(I\cap kB)=\mu_{kB}(J\cap kB)$;\par 

(iv) if $\mu_{kA}(I)$ consists of proper ideals of $kA$ then $\mu
_{kB}(I\cap kB)$ consists of proper ideals of $kB$.
\end{lemma}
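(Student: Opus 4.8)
The plan is to exploit the fact that, since $|A:B|<\infty$, the ring $kA$ is a finitely generated $kB$-module and in particular is integral over $kB$: for every $a\in A$ one has $a^{|A:B|}\in B$, so $a$ satisfies the monic relation $x^{|A:B|}-a^{|A:B|}=0$ over $kB$. Consequently the inclusion $kB\hookrightarrow kA$ enjoys the Lying Over, Going Up and Incomparability properties for integral extensions of commutative Noetherian rings (see \cite{Bour}), and I would reduce all four assertions to these, together with the elementary radical identity $\sqrt{I\cap kB}=\sqrt{I}\cap kB$ (if $x\in kB$ and $x^n\in I$ then automatically $x^n\in I\cap kB$).

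For (i), I would pass to the integral extension $kB/(I\cap kB)\hookrightarrow kA/I$ induced by the composite $kB\hookrightarrow kA\twoheadrightarrow kA/I$, whose kernel is precisely $I\cap kB$. Given $Q\in\mu_{kB}(I\cap kB)$, Lying Over produces a prime $P$ of $kA$ with $I\leq P$ and $P\cap kB=Q$, which settles existence. For minimality, suppose $I\leq P'<P$ with $P'$ prime; then $I\cap kB\leq P'\cap kB\leq P\cap kB=Q$, and since $Q$ is minimal over $I\cap kB$ we obtain $P'\cap kB=Q=P\cap kB$, so $P'$ and $P$ lie over the same prime of $kB$. Incomparability then forces $P'=P$, a contradiction, whence $P\in\mu_{kA}(I)$.

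Assertion (ii) follows formally from (i). If there is $S\in\mu_{kA}(I)$ with $S\cap kB<P\cap kB$, then $P\cap kB$ is not minimal over $I\cap kB$. Conversely, if $P\cap kB\notin\mu_{kB}(I\cap kB)$, then, as $kB$ is Noetherian, there is a prime $Q$ minimal over $I\cap kB$ with $I\cap kB\leq Q<P\cap kB$, and (i) supplies $S\in\mu_{kA}(I)$ with $S\cap kB=Q<P\cap kB$. For (iii), note that $\mu_{kB}(I\cap kB)$ depends only on $\sqrt{I\cap kB}=\sqrt{I}\cap kB$, while $\sqrt{I}=\bigcap_{P\in\mu_{kA}(I)}P$ depends only on $\mu_{kA}(I)$; hence $\mu_{kA}(I)=\mu_{kA}(J)$ gives $\sqrt{I}\cap kB=\sqrt{J}\cap kB$ and therefore $\mu_{kB}(I\cap kB)=\mu_{kB}(J\cap kB)$. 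Finally (iv) is immediate from (i): any $Q\in\mu_{kB}(I\cap kB)$ equals $P\cap kB$ for some $P\in\mu_{kA}(I)$, and if $P$ is proper then $1\notin P$, so $1\notin Q$ and $Q$ is proper.

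The main obstacle is the minimality half of (i). The existence of a contraction landing inside $\mu_{kB}(I\cap kB)$ is a routine application of Lying Over, but showing that every prime lying over $Q$ and containing $I$ is genuinely minimal over $I$ is exactly where Incomparability is indispensable; this is what guarantees that the contraction map behaves well on the finite sets $\mu$. Once (i) is established, parts (ii)--(iv) amount to bookkeeping with radicals and with the order relation on contracted primes.
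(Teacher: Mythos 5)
Your treatment of (i)--(iii) is correct, and for (i) and (ii) it coincides with the paper's own argument: the paper likewise passes to the integral extension $kB/(I\cap kB)\hookrightarrow kA/I$, uses Lying Over (citing Zariski--Samuel rather than Bourbaki) for existence, and Incomparability for the minimality of $P$, and then derives (ii) formally from (i). For (iii) you take a genuinely different, and somewhat slicker, route via the identities $\sqrt{I\cap kB}=\sqrt{I}\cap kB$ and $\sqrt{I}=\bigcap_{P\in\mu_{kA}(I)}P$, whereas the paper matches up contracted primes one at a time using (i) and (ii); both arguments are sound.

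The gap is in (iv). In this paper ``proper ideal'' means an ideal that is neither the whole ring \emph{nor zero}. This is forced by the paper's own proof of (iv), which consists entirely of the claim that $P\cap kB\neq 0$ for a proper ideal $P$ of $kA$ (citing \cite[Lemma 2.2.3(ii)]{Tush2000}), and it is confirmed by the proof of Proposition 4(iii), where both $0$ and $kA$ are explicitly excluded from $\mu_{kA}(Ann_{kA}(U))$, and by the way Lemma 7(iv) is used at the end of the proof of Theorem 1. Your argument only shows $1\notin P\cap kB$, i.e.\ the trivial half $Q\neq kB$; it does not address $Q\neq 0$, which is the substantive content of (iv). To close the gap you need: if $P\in\mu_{kA}(I)$ is nonzero, then $P\cap kB\neq 0$. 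In the situation where the lemma is actually applied ($A$ free abelian, so $kA$ is a domain) this does follow from your integrality setup: pick $0\neq x\in P$ and a monic relation $x^n+s_{n-1}x^{n-1}+\cdots+s_0=0$ over $kB$ of minimal degree; then $s_0\neq 0$, since otherwise one could cancel $x$ in the domain $kA$ and lower the degree, and $s_0=-(x^n+\cdots+s_1x)\in P\cap kB$. Note, however, that for a general finitely generated abelian $A$ the ring $kA$ need not be a domain (e.g.\ $k[\mathbb{Z}/2]$ with $k$ of characteristic $0$ has a nonzero minimal prime meeting $k$ trivially), so this half of (iv) genuinely requires either the domain hypothesis or the external lemma the paper invokes.
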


\begin{proof}

(i) Evidently, the quotient ring $kA/I$ is integer over 
$(kB+I)/I\cong kB/(kB\cap I)$. 
Then it follows from \cite[Ch.V, \S 2, Theorem
3]{ZaSa58} that there is a prime ideal $P\leq kA$ such that $I\leq P$ and $
P\cap kB=Q$. If $P\notin {\mu_{kA}}(I)$ then there is an ideal $P^{\prime
}\in {\mu_{kA}}(I)$ such that $I\leq P^{\prime }<P$ and hence $I\cap kB\leq
P^{\prime }\cap kB\leq P\cap kB=Q$ and, as ${Q\in \mu_{kB}}(I\cap kB)$, we
can conclude that $I\cap kB\leq P^{\prime }\cap kB=P\cap kB=Q$. Since $I\leq
P^{\prime }<P$, it contradicts Complement 1 to \cite[Ch.V, \S 2, Theorem 3]
{ZaSa58}. Thus, $P\in {\mu_{kA}}(I)$.\par 

(ii) Let $P\in {\mu_{kA}}(I)$. Suppose that $P\cap kB\notin {\mu_{kB}}
(I\cap kB)$ then there is a prime ideal $Q\in {\mu_{kB}}(I\cap kB)$ such
that $Q<P\cap kB$. By (i), there is $S\in {\mu_{kA}}(I)$ such that $Q=S\cap
kB$ and hence $S\cap kB<P\cap kB$. Suppose now that there is $S\in {\mu
_{kA} }(I)$ such that $S\cap kB<P\cap kB$. Then $I\cap kB\leq S\cap kB<P\cap
kB$ and hence $P\cap kB\notin {\mu_{kB}}(I\cap kB)$.\par 

(iii) Let $Q\in \mu_{kB}(I\cap kB)$. By (i), there is $P\in {\mu_{kA}}(I)$
such that $Q=P\cap kB$. Then, by (ii), there are no $S\in {\mu_{kA}}(I)$
such that $S\cap kB<P\cap kB$. Since $\mu_{kA}(I)=\mu_{kA}(J)$, we can
conclude that there is $P\in {\mu_{kA}}(J)$ such that $Q=P\cap kB$ and
there are no $S\in {\mu_{kA}}(J)$ such that $S\cap kB<P\cap kB$. Then it
follows from (ii) that $Q\in \mu_{kB}(J\cap kB)$. The same arguments show
that if $Q\in \mu_{kB}(J\cap kB)$ then $Q\in \mu_{kB}(I\cap kB)$ and the
assertion follows.\par 

(iv) It follows from \cite[Lemma 2.2.3(ii)]{Tush2000} that if $P$ is a
proper ideal of $kA$ then $P\cap kB\neq 0$ and the assertion follows.
\end{proof}

 
\begin{lemma}
 Let $A$ be a free abelian group of finite rank and let $B$
be a subgroup of finite index in $A$. Let $k$ be a field such that $
chark\notin \pi (A/{B})$  and let $I$ be an ideal of $kA$ such that $
I=(I\cap kB)kA$. Suppose that there is a prime ideal $Q\in \mu_{kB}(I\cap kB)$
such that there is only one $P\in {\mu_{kA}}(I)$ such that $Q=kB\cap P$.
Then $P=QkA$.
 \end{lemma}

\begin{proof}

As $I=(I\cap kB)kA$ and $(I\cap kB)\leq Q$, we see that $QkA\geq I$.
If $P\in {\mu_{kA}}(I)$ and $Q=kB\cap P$ then $ P\geq QkA$ and
hence there is only one prime minimal over $QkA\geq I$. Therefore, it is
sufficient to show that the ideal $QkA$ is prime because then, by Lemma
7(i), $QkA\in {\mu_{kA}}(I)$. So, it is sufficient to show that if the
quotient ring $kA/QkA$ has only one minimal prime then $kA/QkA$ is a domain.
Since the quotient ring $kA/QkA$ is isomorphic to the cross product $(k{B}
/Q)\ast (A/{B})$ and $chark\notin \pi (A/{B})$, if follows from \cite[
Theorem 2.1]{Pass85} that the quotient ring $kA/QkA$ is semiprime and hence $
kA/QkA$ has no non-zero nilpotent elements. Therefore, if $kA/QkA$ is not a
domain then there are zero divisors $a,b\in kA/QkA$ such that $a\neq b$ and
hence there are at least two different minimal prime ideals of $kA$ over $
QkA $: minimal prime $P_{1}$ such that $a\notin P_{1}$ and minimal prime $
Q_{2} $ such that $b\notin Q_{2}$, but it leads to a contradiction.
\end{proof}

\section{ Primitive representations of finitely generated nilpotent groups}

Let $F$ be a field of and let $G$ be a finitely generated nilpotent group.
We say that a semi-primitive $FG$-module $M$ is purely semi-primitive if it
is not induced from a primitive $FX$-module for any subgroup $X\leq G$.


\begin{lemma}
Let $G$ be a finitely generated nilpotent group and let $H$ be a subgroup of $
G $. Let $F$ be a field and let $M$ be an $FG$-module such that ${M}={V}
\otimes _{FH}FG$, where ${V}$is an $FH$-module. Then:

(i) if $M$ is an irreducible $FG$-module then ${V}$ is an irreducible $FH$
-module;

(ii) if $M$ is a purely semi-primitive $FG$-module then ${V}$ is a purely
semi-primitive $FH$-module.

\end{lemma}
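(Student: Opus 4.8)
The plan is to deduce everything from the direct sum decomposition of an induced module together with the transitivity of induction; none of the commutative-algebra machinery of the preceding sections is needed here.

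For (i) I would use that $FG$ is free as a left $FH$-module on a right transversal $T$ to $H$ in $G$, so that $M=V\otimes_{FH}FG=\oplus_{t\in T}Vt$, where $V=V\otimes 1$ is identified with its copy at $t=1$. Arguing by contraposition, suppose $V$ is not irreducible and pick a proper non-zero $FH$-submodule $V_0<V$. By \cite[Chap. 2, Lemma 1.1]{Karp90} the submodule $V_0\otimes_{FH}FG=\oplus_{t\in T}V_0t$ is an $FG$-submodule of $M$; it is non-zero because $V_0\neq 0$, and it is proper because its component at $t=1$ is $V_0\neq V$ while the sums are direct. Hence $M$ is not irreducible, which is the desired contrapositive.

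For (ii) there are two things to verify, both resting on the associativity isomorphism $(U\otimes_{FX}FH)\otimes_{FH}FG\cong U\otimes_{FX}FG$ of $FG$-modules for $X\leq H$. First I would check that $V$ is semi-primitive. If not, there is $X\leq H$ with $h(X)<h(H)$ and an $FX$-submodule $U\leq V$ with $V=U\otimes_{FX}FH$; then $M\cong U\otimes_{FX}FG$, and since $U\otimes 1$ is an $FX$-submodule of $M=\oplus_{t}(U\otimes 1)t$, the module $M$ is induced from an $FX$-submodule. As $X\leq H\leq G$ gives $h(X)<h(H)\leq h(G)$, this contradicts the semi-primitivity of $M$ (which holds because $M$ is purely semi-primitive). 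Second I would check that $V$ is not induced from a primitive module: if $V=U\otimes_{FX}FH$ with $U$ a primitive $FX$-module and $X\leq H$, then again $M\cong U\otimes_{FX}FG$ is induced from the primitive $FX$-submodule $U\otimes 1$ (primitivity being isomorphism-invariant), contradicting that $M$ is not induced from any primitive $FY$-module with $Y\leq G$. Combining the two conclusions yields that $V$ is purely semi-primitive.

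I do not expect a genuine obstacle here: the argument is a formal consequence of the transitivity of induction. The only points requiring care are bookkeeping ones, namely translating the abstract isomorphism $M\cong U\otimes_{FX}FG$ into the literal statement that $M$ is induced from an $FX$-submodule demanded by the definitions (via the distinguished copy $U\otimes 1$ and the transversal decomposition of $M$), and invoking the monotonicity $h(X)\leq h(H)\leq h(G)$ of the Hirsch number under inclusion of subgroups in order to transport the strict Hirsch-number inequality from $H$ up to $G$.
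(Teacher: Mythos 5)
Your proposal is correct and takes essentially the same route as the paper: part (i) rests on the fact that inducing a proper non-zero $FH$-submodule of $V$ yields a proper non-zero $FG$-submodule of $M$ (which you derive directly from the transversal decomposition $M=\oplus_{t\in T}Vt$, where the paper simply cites Karpilovsky's Lemma 2.8), and part (ii) is the same transitivity-of-induction argument combined with the monotonicity $h(X)<h(H)\leq h(G)$ and the observation that purely semi-primitive modules are in particular semi-primitive.
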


\begin{proof}

(i) It follows from \cite[Lemma 2.8(i, ii)]{Karp90} that if ${U}$ is a
proper submodule of ${V}$ then ${N}={U}\otimes _{FH}FG$ is a proper
submodule of ${M}$. As the $FG$-module  ${M}$ is irreducible, it implies
that the $FH$-module ${V}$ is irreducible.\par

(ii) If the module ${V}$ is not semi-primitive then there is a subgroup $K$
of $H$ such that ${V}={U}\otimes _{FK}FH$, where ${U}$ is an $FK$-submodule
of $V$ and $h(K)<h(H)$. Then, as ${M}={V}\otimes _{FH}FG$, it follows from 
\cite[Lemma 2.8(i. ii)]{Karp90} that ${M}={U}\otimes _{FK}FG$ but, as $
h(K)<h(H)\leq h(G)$, it is impossible because ${M}$ is semi-primitive.
Thus, ${V}$ is semi-primitive. \par

If ${V}$ is not purely semi-primitive then there is a subgroup $K$ of $H$
such that ${V}={U}\otimes _{FK}FH$, where ${U}$ is a primitive $FK$
-module. Then, as ${M}={V}\otimes _{FH}FG$, it follows from \cite[Lemma 2.1
]{Karp90} that ${M}={U}\otimes _{FK}FG$ but it is impossible because ${M}$
 is purely semi-primitive.

\end{proof}


\begin{lemma}
Let $F$ be a field and let $G$ be a finitely generated nilpotent group.
Suppose that there exists a purely semi-primitive irreducible $FG$-module $
M $. Then there are a descending chain $\{{G_{i}}|i\in \mathbb{N}\}$ of
subgroups of finite index in $G$ and a set $\{a_{i}|i\in \mathbb{N}
\}\subseteq M$ such that :

(i) $G/K$ is a free abelian group, where $K=\cap _{i\in \mathbb{N}}{G_{i}}$
and either for any prime integer $p$ there is $n\in \mathbb{N}$ such that $
p\notin \pi ({G_{n}}/{G_{i}})$ for all $i\geq n$ or $G/{G_{i}}$ is a $p$
-group for all $i\in \mathbb{N}$, where $p$ is a prime integer;

(ii) $a_{i}FG=a_{i}FG_{i}\otimes _{FG_{i}}FG$ for each $i\in \mathbb{N}$.

\end{lemma}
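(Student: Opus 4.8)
The plan is to manufacture the required chain by repeatedly exploiting the failure of primitivity and then to regularize the resulting chain by means of Proposition 1. First I note that a purely semi-primitive module is never primitive: were $M$ primitive, the trivial identity $M = M \otimes_{FG} FG$ would display $M$ as induced from the primitive $FG$-module $M$, contradicting the hypothesis. Hence there is a proper subgroup $H_1 < G$ and an $FH_1$-submodule $V_1$ with $M = V_1 \otimes_{FH_1} FG$. Since $M$ is semi-primitive we cannot have $h(H_1) < h(G)$; thus $h(H_1) = h(G)$, and because $G$ is polycyclic this forces $|G : H_1| < \infty$. By Lemma 9 the $FH_1$-module $V_1$ is again irreducible and purely semi-primitive, and $H_1$ is once more finitely generated nilpotent.

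Iterating, I obtain a strictly descending chain $G = H_0 > H_1 > H_2 > \cdots$ of subgroups of finite index together with irreducible purely semi-primitive $FH_i$-modules $V_i$ satisfying $V_{i-1} = V_i \otimes_{FH_i} FH_{i-1}$, whence $M = V_i \otimes_{FH_i} FG$ for every $i$ by transitivity of induction (\cite[Lemma 2.1]{Karp90}). The process does not terminate precisely because Lemma 9(ii) keeps each $V_i$ purely semi-primitive, hence non-primitive, so a further proper step is always available. As each $V_i$ is irreducible, $V_i = a_i FH_i$ for any chosen nonzero $a_i \in V_i \subseteq M$; combining the decomposition $M = \oplus_{t \in T} V_i t$ over a transversal $T$ of $H_i$ in $G$ with the direct-sum criterion \cite[Chap. 2, Lemma 1.1]{Karp90}, I conclude $a_i FG = (a_i FH_i) \otimes_{FH_i} FG$.

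Because the indices $|G : H_i|$ strictly increase, $|G : \cap_{i} H_i| = \infty$, so Proposition 1 applies to $\{H_i\}$ and yields a descending chain $\{G_i\}$ of finite-index subgroups with $D H_i \leq G_i$ (so in particular $H_i \leq G_i$), with $G/\cap_i G_i$ free abelian, and satisfying exactly the arithmetic dichotomy demanded in (i). It then remains to transport the induction relation from $H_i$ up to the larger $G_i$: since $H_i \leq G_i \leq G$, transitivity of induction gives $M = \big((a_i FH_i) \otimes_{FH_i} FG_i\big) \otimes_{FG_i} FG$, and refining a transversal of $H_i$ in $G$ through one of $H_i$ in $G_i$ shows, via the identity $a_i FH_i = V_i$, that the inner induced module is realized inside $M$ as $a_i FG_i$. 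Hence $a_i FG = M = (a_i FG_i) \otimes_{FG_i} FG$, which is (ii).

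I expect the main obstacle to be exactly this last passage from the chain $\{H_i\}$ produced by the iteration --- which carries no control over the primes occurring in the factors $H_i/H_{i+1}$ --- to the regularized chain $\{G_i\}$ furnished by Proposition 1: one must verify that enlarging each $H_i$ to $G_i$ does not destroy the induced structure of $M$. This is settled by transitivity of induction together with careful bookkeeping of transversals, where it is essential that $V_i$ is irreducible so that $a_i FH_i = V_i$ pins down the direct summands $V_i t$. The comparatively routine points are the non-termination of the iteration (guaranteed by Lemma 9(ii)) and the infinitude of $|G : \cap_i H_i|$, which makes Proposition 1 applicable.
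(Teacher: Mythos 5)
Your proposal is correct and follows essentially the same route as the paper's own proof: iterate the failure of primitivity via Lemma 9 to build a strictly descending chain $\{H_i\}$ of finite-index subgroups with $M=V_i\otimes_{FH_i}FG$, apply Proposition 1 to regularize it to $\{G_i\}$, and transport the induced structure up to $G_i$ by transitivity of induction. The only (harmless) deviations are that you take an arbitrary nonzero $a_i\in V_i$ where the paper takes $a_i\in V_i\setminus V_{i+1}$, and that you spell out the transversal refinement that the paper delegates to \cite[Lemma 2.1]{Karp90}.
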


\begin{proof}

As the module $M$ is irreducible and purely semi-primitive, there are a
proper subgroup $H_{1}$ of finite index in $G$ and an $FH_{1}$-submodule 
$V_{1}$ of $M$ such that  $M=V_{1}\otimes _{FH_{1}}F{G}$. It follows from
Lemma 9 that the $FH_{1}$-submodule  $V_{1}$ is irreducible and purely
semi-primitive. Then the above arguments show that there are a proper
subgroup  $H_{2}$ of finite index in $H_{1}$ and an $FH_{2}$-submodule $
V_{2}$ of $V_{1}$ such that $V_{1}=V_{2}\otimes _{FH_{2}}FH_{1}$. As $
M=V_{1}\otimes _{FH_{1}}F{G}$ and $V_{1}=V_{2}\otimes _{FH_{2}}FH_{1}$, it
follows from \cite[Lemma 2.1]{Karp90} that $M=V_{2}\otimes _{FH_{2}}F{G}$.
It follows from Lemma 9 that the $FH_{2}$-submodule $V_{2}$ is irreducible
and purely semi-primitive. Continuing this process we obtain an infinite
strongly descending chain $\{{H_{i}}|i\in \mathbb{N}\}$ of subgroups of
finite index in $G$ and a strongly descending chain $\{V_{i}|i\in \mathbb{N}\}$, 
where $V_{i}$ is an irreducible and purely semi-primitive $F{H_{i}}${-module } 
such that $M=V_{i}\otimes _{FH_{i}}F{G}$ {for each }$i\in \mathbb{N}$. 
As $M$ is an irreducible $F{G}$-submodule and $V_{i}$ is an irreducible 
$F{H_{i}}$-module, we can conclude that for any  
$a_{i}\in V_{i}\backslash V_{i+1}$ we have $a_{i}FG=M$ and $a_{i}FH_{i}=V_{i}$ 
for each  $i\in \mathbb{N}$. Therefore, $a_{i}FG=a_{i}FH_{i}\otimes_{FH_{i}}F{G}$ 
for each $i\in \mathbb{N}$.\par 

Thus, we have an infinite strongly descending chain $\{{H_{i}}|i\in \mathbb{N
}\}$ of subgroups of finite index in $G$ and a set $\{a_{i}|i\in \mathbb{N}
\}\subseteq M$ such that $a_{i}FG=a_{i}FH_{i}\otimes _{FH_{i}}F{G}$ for
each $i\in \mathbb{N}$. As the chain $\{H_{i}|i\in \mathbb{N}\}$ is
strongly descending, we see that $|G:\cap _{i\in \mathbb{N}}{H_{i}}|=\infty $. 
 Then, by Proposition 1, there is a descending chain $\{{G_{i}}|i\in
\mathbb{N}\}$ of subgroups of finite index in $G$ such that ${H_{i}}\leq {
G_{i}}$ and the quotient group $G/K$ is free abelian, where $K=\cap _{i\in 
\mathbb{N}}{G_{i}}$.

(i) By Proposition 1, either for any prime integer $p$ there is $n\in 
\mathbb{N}$ such that $p\notin \pi ({G_{n}}/{G_{i}})$ for all $i\geq n$ or 
$G/{G_{i}}$ is a $p$-group for all $i\in \mathbb{N}$, where $p$ is a prime
integer.

(ii) As ${H_{i}}\leq {G_{i}}$ and $a_{i}FG=a_{i}FH_{i}\otimes _{FH_{i}}FG$, 
it follows from \cite[Lemma 2.1]{Karp90} that    ${a_{i}}FG={a_{i}}
FG_{i}\otimes_{FG_{i}}FG$ for all $i\in \mathbb{N}$.

\end{proof}


\begin{lemma}
Let $G$ be a finitely generated nilpotent group and let $K$ be a normal
subgroup of $G$ such that the quotient group $G/K$ is finitely generated
free abelian. Let $R$ be an absolutely Hilbert ring. Let $W$ be an $RG$
-module such that $W=VRG$ and $W\neq V\otimes_{RK}RG$, where $V$ is a
uniform $G$-ideal critical $RK$-module. Then there exists a submodule ${U}$
of $V$ with the property that when a $G$-large ideal $I$ of $RK$ culls ${U}$
in $V$ then $W/WI\neq 0$ and  $W/WI$ is ${kA}$
-torsion, where $k=R/(R\cap I)$ and $A$ is a free abelian central subgroup
of finite index in $G/I^{\dag }$. \end{lemma}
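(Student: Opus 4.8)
The plan is to encode the failure of $W$ to be induced in the kernel of the natural surjection $\psi\colon V\otimes_{RK}RG\to W$, and then to translate the two required properties of $W/WI$ into the position of the reduction of that kernel inside a free $kA$-module.

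First I would fix a $G$-large ideal $I$ culling the (still to be chosen) submodule $U$ and analyse the reduction. Since $W=VRG$ the map $\psi$ is onto, and because $W\neq V\otimes_{RK}RG$ the criterion \cite[Chap. 2, Lemma 1.1]{Karp90} shows that $N=\operatorname{Ker}\psi\neq 0$. As $I$ is $G$-invariant one has $(V\otimes_{RK}RG)I=\bigoplus_{t\in T}(VI)t$, whence
\[
\overline{V\otimes RG}:=(V\otimes_{RK}RG)/(V\otimes_{RK}RG)I\;\cong\;\bigoplus_{t\in T}(V/VI)\,t .
\]
Since $I^{\dagger}$ centralises and $R\cap I$ annihilates this module, it is the $k(G/I^{\dagger})$-module induced from the $k(K/I^{\dagger})$-module $V/VI$; as $|K/I^{\dagger}|<\infty$ and $V/VI$ is finitely generated over $k(K/I^{\dagger})$, the space $V/VI$ is finite dimensional over $k$, and restricting the action to the central free abelian subgroup $A$ of finite index exhibits $\overline{V\otimes RG}$ as a free $kA$-module of finite rank $r$. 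Finally $W/WI\cong\overline{V\otimes RG}/\overline N$ with $\overline N$ the image of $N$. Thus $W/WI\neq 0$ means precisely that $\overline N$ is a proper submodule, while $W/WI$ being $kA$-torsion means precisely that $\overline N$ has full rank $r$ (equivalently $\overline N\otimes_{kA}\operatorname{Frac}(kA)=\overline{V\otimes RG}\otimes_{kA}\operatorname{Frac}(kA)$), since $kA$ is a domain and $\overline{V\otimes RG}$ is free of finite rank.

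Next I would choose $U$ so that the reduction of a single relation is as visible as possible. Pick $0\neq x\in N$ of minimal support, say $x=\sum_{t\in S}v_t\otimes t$ with $S$ finite, every $v_t\neq 0$, and $|S|$ least among non-zero elements of $N$; translating $x$ by an element of $G$ we may assume $1\in S$. Put $U=\bigcap_{t\in S}v_tRK$, which is non-zero because $V$ is uniform. For any $G$-large ideal $I$ culling $U$ we have $VI+U=V$ and $VI<V$, so $U\not\subseteq VI$; and since $U\subseteq v_tRK$ for every $t\in S$, the relation $v_t\in VI$ would force $U\subseteq v_tRK\subseteq VI$, a contradiction. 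Hence every component $\overline{v_t}$ of $\overline x=\sum_{t\in S}\overline{v_t}\otimes t$ is non-zero. To obtain $W/WI\neq 0$, i.e. that $\overline N$ is proper, I would fix a total monomial order on $G/K\cong\mathbb{Z}^{d}$ and argue that, by minimality of $|S|$, no reduced relation can have singleton support: an identity $WI=W$ would express some $v\in V\setminus VI$ as $\sum_t u_tt$ with $u_t\in VI$, and cancellation of the top term in this expression would produce a non-zero element of $N$ of strictly smaller support, contradicting the choice of $x$.

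The remaining and decisive point is that $\overline N$ has full rank $r$, equivalently that $W/WI$ is $kA$-torsion, and this is where I expect the real difficulty. The $A$-orbit of $\overline x$ spans only a rank-one free $kA$-module, so the missing $r-1$ rational dimensions must be produced by the finite fibre $K/I^{\dagger}$ together with a transversal $d_1,\dots,d_m$ of $\overline A$ in $G/K$. Writing $t^{*}\in S$ for the top coset of $\overline x$, the leading coefficients of the translates $\overline x\,(c\,d_j)$, with $c\in K/I^{\dagger}$ and $1\le j\le m$, lie in the distinct cosets $t^{*}d_j$ and run, as $c$ varies, through $\overline{v_{t^{*}}}\,k(K/I^{\dagger})$; spreading these over the free abelian group $A$ would give a full-rank family provided $\overline{v_{t^{*}}}\,k(K/I^{\dagger})=V/VI$. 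Thus the crux is to show that the surviving leading coefficient $\overline{v_{t^{*}}}$ generates $V/VI$ as a $k(K/I^{\dagger})$-module, for every admissible culling $I$. This is exactly the step that must be fed by the uniformity and $G$-ideal criticality of $V$ (which control the $k(K/I^{\dagger})$-module structure of $V/VI$) together with the robustness built into the choice $U=\bigcap_{t\in S}v_tRK$; once it is in place, a leading-term argument shows $\overline N$ has rank $r$, and with the properness of the previous paragraph we conclude that $W/WI$ is non-zero and $kA$-torsion.
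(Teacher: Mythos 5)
Your reduction of the problem to the position of $\overline N$ (the image of $\operatorname{Ker}\psi$) inside the free $kA$-module $\bigoplus_{t}(V/VI)t$ is a reasonable reformulation, and your observation that culling $U=\bigcap_{t\in S}v_tRK$ forces every component $\overline{v_t}$ of $\overline x$ to be non-zero is correct. But the two substantive claims are not established, and the paper does not attempt to establish them from scratch either: its proof of this lemma consists of invoking Lemmas 8 and 14 of Brookes \cite{Broo88}, which supply non-zero submodules $V_1$ and $V_2$ of $V$ such that culling $V_1$ forces $W/WI$ to be $kA$-torsion (given $W\neq V\otimes_{RK}RG$) and culling $V_2$ forces $W/WI\neq 0$, and then setting $U=V_1\cap V_2$ (non-zero by uniformity, and culling $U$ culls both $V_i$ by Brookes's Lemma 6). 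You are in effect trying to reprove Brookes's Lemmas 8 and 14, and both of your arguments have gaps.

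For the non-vanishing claim, your assertion that ``by minimality of $|S|$, no reduced relation can have singleton support'' does not follow. Minimality of $|S|$ is a condition on elements of $N$ inside $\bigoplus_t Vt$, but an element $y\in N$ of support size at least $|S|$ can perfectly well reduce modulo $(V\otimes_{RK}RG)I$ to an element of $\overline N$ of strictly smaller, even singleton, support: nothing prevents some components of $y$ from lying in $VI$ while others do not, since $y$ is not one of the elements whose components you arranged to survive. So the proposed cancellation step produces an element of $N$ whose support you cannot control, not a contradiction. Moreover, even excluding singleton reduced supports would not by itself give $\overline N+(V\otimes RG)I\neq V\otimes RG$. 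For the torsion claim you explicitly defer the decisive step --- that $\overline{v_{t^*}}$ generates $V/VI$ over $k(K/I^{\dag})$ for every admissible culling ideal $I$ --- with the phrase ``once it is in place''; no argument is offered, and there is no evident reason why the single choice $U=\bigcap_{t\in S}v_tRK$, made independently of $I$, should force this for all culling ideals. Since these are exactly the points carried by Brookes's Lemmas 14 and 8 respectively (whose proofs are genuinely nontrivial), the proposal as written does not constitute a proof.
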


\begin{proof}

It follows from \cite[lemma 8]{Broo88} that there exists a non-zero
submodule $V_{1}$ of $V$ with the property that when a $G$-large ideal $I$
of $RK$ culls $V_{1}$ in $V$ then $W\neq V\otimes _{RK}RG$ if and only if $
W/WI\otimes _{kA}Q=0$, where $Q$ is the field of fractions of $kA$. As 
$W\neq V\otimes _{RK}RG$, if $I$ culls $V_{1}$ in $V$ then $W/WI\otimes
_{kA}Q=0$. However, it is well known that $W/WI\otimes _{kA}Q=0$ if and only
if $W/WI$ is $kA$-torsion.

It follows from \cite[lemma 14]{Broo88} that there is a non-zero submodule $
V_{2}$ of $V$ with the property that when a $G$-large ideal $I$ of $RK$
culls $V_{2}$ in $V$ then $W/WI\neq 0$. Let $U=V_{1}\cap V_{2}$ then, as $V$
is uniform, $U\neq 0$ and, by \cite[lemma 6]{Broo88}, if a $G$-invariant
ideal $I$ of $RK$ culls ${U}$ in $V$ then $I$ culls $V_{1}$ and $V_{2}$. So,
the assertion follows. \end{proof}


\begin{lemma}
Let $F$ be a finitely generated field of characteristic zero and let $G$ be a
finitely generated nilpotent group. Suppose that there exists a purely
semi-primitive irreducible $FG$-module $M$. Then there are an absolutely
Hilbert domain $R\leq F$, a descending chain $\{{G_{i}}|i\in \mathbb{N}\}$
of subgroups of finite index in $G$ and a set $\{a_{i}|i\in \mathbb{N}
\}\subseteq M$ such that the quotient group $G/K$ is free abelian, where $
K=\cap _{i\in \mathbb{N}}{G_{i}}$, and ${a_{i}}RG={a_{i}}RG_{i}\otimes
_{RG_{i}}RG$ for all $i\in \mathbb{N}$. Further, there is a $G$-large ideal $
I$ of $RK$ with a free abelian central subgroup ${A}$ of finite index in $
G/I^{\dag }$ and $k=R/(R\cap I)$ such that:

(i) $({a_{i}}RG)/({a_{i}}RG)I\neq 0$ and $({a_{i}}RG)/({a_{i}}RG)I$ is ${kA}$
-torsion for all $i\in \mathbb{N}$;

(ii) $\mu _{kA}(({a_{i}}RG)/({a_{i}}RG)I)=\mu _{kA}(({a_{j}}RG)/({a_{j}}RG)I)
$ for all $i,j\in \mathbb{N}$;

(iii) $chark\notin \pi (G_{1}/G_{i})$ for all $i\in \mathbb{N}$.

\end{lemma}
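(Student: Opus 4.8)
The plan is to assemble the pieces that have already been established. Lemma 8 gives us, from the existence of a purely semi-primitive irreducible $FG$-module $M$, a strongly descending chain $\{G_i\}$ of finite-index subgroups, elements $\{a_i\}\subseteq M$ with $a_iFG=a_iFG_i\otimes_{FG_i}FG$, and the key property that $G/K$ is free abelian with the dichotomy on the prime set $\pi(G_n/G_i)$. The first step is to descend from the field $F$ to a suitable absolutely Hilbert domain $R\leq F$. Since $F$ is finitely generated of characteristic zero, I would choose $R$ to be a finitely generated $\mathbb{Z}$-subalgebra whose field of fractions is $F$; such an $R$ is Noetherian, Hilbert, and all its field images are finite (hence locally finite), so it is absolutely Hilbert. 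I need $R$ large enough that each $a_i$ lies in an $RG$-module and that the tensor decomposition $a_iFG=a_iFG_i\otimes_{FG_i}FG$ descends to $a_iRG=a_iRG_i\otimes_{RG_i}RG$; this descent is the standard observation that a direct-sum (free) decomposition over a transversal persists after restricting scalars to a domain with field of fractions $F$, using \cite[Chap. 2, Lemma 1.1]{Karp90}.

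\textbf{Constructing the ideal $I$.} The heart of the argument is to produce the $G$-large ideal $I$ of $RK$ with the torsion and nonvanishing properties. Here I would invoke Proposition 2 (with the normal subgroup $K=\cap_i G_i$, for which $G/K$ is free abelian) to find a cyclic uniform $RK$-submodule $V=aRK\leq W$ that is $(\pi,G)$-large critical for every finite $\pi$, where $W$ is the $RG$-module generated by the chosen generators. Because $M$ is purely semi-primitive, some $a_iRG$ fails to be a genuine induced module $V\otimes_{RK}RG$, which puts us exactly in the hypothesis of Lemma 12: there is a submodule $U\leq V$ such that any $G$-large ideal $I$ culling $U$ in $V$ yields $W/WI\neq 0$ with $W/WI$ being $kA$-torsion. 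Applying the $(\pi,G)$-large criticality of $V$ (from Proposition 2) to cull $U$ while controlling the characteristic, I obtain a $G$-large ideal $I$ with $chark=char(R/(R\cap I))\notin\pi$; taking $\pi$ to contain all primes dividing $|G_1/G_i|$ that need to be excluded gives condition (iii). Lemma 3 then supplies the central free abelian $A\leq G/I^\dagger$ of finite index.

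\textbf{Transferring the torsion property to the individual $a_i$.} Once $I$ is fixed, the conclusions (i) and (ii) are where Proposition 3 does the work. Since $M$ is an irreducible $FG$-module and the $a_i$ are nonzero, Proposition 3(i) shows each $a_iRG$ contains a copy of $a_jRG$ and vice versa; then Proposition 3(iii) propagates $(a_iRG)/(a_iRG)I\neq 0$ from one index to all, Proposition 3(iv) propagates the $kA$-torsion property, and Proposition 3(ii) gives the equality $\mu_{kA}((a_iRG)/(a_iRG)I)=\mu_{kA}((a_jRG)/(a_jRG)I)$ of (ii). The point is that Lemma 12 only guarantees the torsion/nonvanishing for the particular module $W$ coming from the culling setup, and I must transfer these facts to every $a_iRG$; this transfer is exactly the content of Proposition 3, which applies because $M$ is irreducible so all the cyclic submodules $a_iRG$ and $a_jRG$ are interchangeable up to containment.

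\textbf{The main obstacle} I expect is the bookkeeping of the characteristic control in step (iii): I must verify that the finite set $\pi$ fed into the $(\pi,G)$-large criticality of $V$ can be chosen to simultaneously capture \emph{all} the primes dividing the relevant indices $|G_1/G_i|$ for $i\in\mathbb{N}$. This looks problematic because there are infinitely many indices $i$, but the dichotomy in Lemma 8(i) resolves it: either the prime sets $\pi(G_n/G_i)$ eventually stabilize (so only finitely many primes ever occur and $\pi$ can be chosen finite), or $G/G_i$ is a $p$-group for a single prime $p$ (so excluding $p$ alone suffices). Thus the apparent infinitude collapses to a finite exclusion set in both branches, and Lemma 5 (via Proposition 2) then guarantees a culling ideal of the right characteristic.
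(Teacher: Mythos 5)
Your overall route is the paper's own: the same choice of a finitely generated subdomain $R\leq F$ with field of fractions $F$ (absolutely Hilbert by the Nullstellensatz-type argument), the same descent of the decompositions $a_iFG=a_iFG_i\otimes_{FG_i}FG$ to $R$ via the transversal, Proposition 2 applied with $K=\cap_iG_i$ to get a $(\pi,G)$-large critical cyclic uniform $RK$-submodule $V=a_0RK$, the culling lemma (your ``Lemma 12'', the paper's Lemma 11) combined with semi-primitivity and $h(K)<h(G)$ to produce the target submodule $U$ and hence the $G$-large ideal $I$, and Proposition 3(i)--(iv) to propagate non-vanishing, $kA$-torsion and the equality of the sets $\mu_{kA}$ from $a_0$ to every $a_i$. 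All of that is sound and matches the paper.

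The one genuine gap is in your treatment of conclusion (iii). You resolve the first branch of the dichotomy by asserting that ``only finitely many primes ever occur,'' so that the finite exclusion set $\pi$ can be taken to contain every prime dividing some $|G_1/G_i|$. That misreads the dichotomy: the first alternative says that \emph{for each} prime $p$ there is an $n$ (depending on $p$) with $p\notin\pi(G_n/G_i)$ for all $i\geq n$; it does not bound the total number of primes occurring along the chain. For instance, if $G$ is infinite cyclic and $G_i$ has index $p_1p_2\cdots p_i$ (the product of the first $i$ primes), the first alternative holds while $\cup_i\pi(G_1/G_i)$ is infinite, so your $\pi$ would have to be infinite --- which Lemma 5 and Proposition 2 do not permit. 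The missing idea is that in this branch you need no control over $chark$ at all: whatever characteristic $k=R/(R\cap I)$ turns out to have, the first alternative supplies an $n$ with $chark\notin\pi(G_n/G_i)$ for all $i\geq n$, and you then discard the first $n-1$ terms and relabel, replacing $\{G_i\}$ and $\{a_i\}$ by $\{G_{n+i-1}\}$ and $\{a_{n+i-1}\}$. This re-indexing changes neither $K=\cap_iG_i$ nor $I$ nor the index-uniform conclusions (i) and (ii), and it converts ``$\pi(G_n/G_i)$'' into the required ``$\pi(G_1/G_i)$.'' The finite exclusion set is genuinely needed only in the second branch, where $\pi=\{p\}$ suffices, exactly as you say.
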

 \begin{proof}

As the field $F$ is finitely generated, there is a finitely generated domain 
$R$ such that $F$ is the field of fractions of $R$. It follows from \cite[
Chap.V,\S 3.4, Corollary 1]{Bour} that $R$ is an absolutely Hilbert domain.

Let $\{{G_{i}}|i\in \mathbb{N}\}$ be a descending chain of subgroups of
finite index in $G$ and $\{a_{i}|i\in \mathbb{N}\}\subseteq M$ be a set
which exist by Lemma 10. By Lemma 10(i), the quotient group $G/K$ is free
abelian, where $K=\cap _{i\in \mathbb{N}}{G_{i}}$. By Lemma 10(ii), $
a_{i}FG=a_{i}FG_{i}\otimes _{FG_{i}}FG$ and it implies that $RG={a_{i}}
RG_{i}\otimes _{RG_{i}}RG$ for all $i\in \mathbb{N}$. By Proposition 2 there
is a cyclic uniform $RK$-submodule $a_{0}RK=V\leq M$ which is $(\pi ,G)$
-large critical for any finite set of prime integers $\pi \subseteq \mathbb{N
}$.

 As the module $M$ is irreducible purely semi-primitive and $
h(K)<h(G)$, we see that $M={a_{0}}FG\neq {a_{0}}FK\otimes _{FK}FG$ and hence 
${a_{0}}RG\neq {a_{0}}RK\otimes _{RK}RG$. Then it follows from Lemma 11 that
there is a submodule $U\leq a_{0}RK$ with the property that when a $G$-large
ideal $I$ of $RK$ culls $U$ in $a_{0}RK$ then $({a_{0}}RG)/({a_{0}}RG)I\neq 0
$ and the $kA$-module $({a_{0}}RG)/({a_{0}}RG)I\neq 0$ is $kA$-torsion,
where $k=R/(R\cap I)$ and $A$ is a free abelian central subgroup of finite
index in $G/I^{\dag }$. As the $RK$-module $a_{0}RK$ is $(\pi ,G)$-large
critical, such an ideal $I$ exists. 

(i) As  $({a_{0}}RG)/({a_{0}}RG)I\neq 0$ and $({a_{0}}RG)/({a_{0}}RG)I$ 
is $kA$-torsion, it follows from Proposition 3(iii, iv) that 
$(a{_{i}}RG)/({a_{i}}RG)I\neq 0$ and $(a{_{i}}
RG)/({a_{i}}RG)I$ is $kA$-torsion for all $i\in \mathbb{N}$.

(ii) By Proposition 3(ii),  $\mu_{kA}(({a_{i}}RG)/({a_{i}}
RG)I)=\mu _{kA}(({a_{j}}RG)/({a_{j}}RG)I)$ for all $i,j\in \mathbb{N}$.

(iii) By Lemma 10(i), either for any prime integer $p$ there is $n\in 
\mathbb{N}$ such that $p\notin \pi ({G_{n}}/{G_{i}})$ for all $i\geq n$ or $
G/{G_{i}}$ is a $p$-group for all $i\in \mathbb{N}$, where $p$ is a prime
integer. Since the submodule ${a_{0}}RK$ is $(\pi ,G)$-large critical for
any finite set of prime integers $\pi \subseteq \mathbb{N}$, we can chose
the ideal $I$ such that $chark\neq p$ if $G/G_{i}$ is a $p$-group for all $
i\in \mathbb{N}$. Therefore, there is $n\in \mathbb{N}$ such that $
chark\notin \pi (G_{n}/G_{i})$ for all $i\geq n$. Starting the chain $\{{
G_{i}}|i\in \mathbb{N}\}$ and the set $\{{a_{i}}|i\in \mathbb{N}\}$ from the 
$n$-th members we obtain the assertion.
\end{proof}


\begin{lemma}

Let $F$ be a finitely generated field of characteristic zero and let $G$ be
a finitely generated nilpotent group. Suppose that there exists a purely
semi-primitive irreducible $FG$-module $M$. Then there exist a finitely
generated free abelian group $A$, a locally finite field $k$, a descending
chain $\{{A_{i}}|i\in \mathbb{N}\}$ of subgroups of finite index in $A$ and
a set $\{J_{i}|i\in \mathbb{N}\}$ of ideals of the group algebra $kA$ such
that:

(i) $\cap_{i\in \mathbb{N}}{A_{i}}=1$ and $chark\notin \pi (A_{1}/{A_{i}})$
for all $i\in \mathbb{N}$;

(ii) ${J_{i}}=({J_{i}}\cap k{A_{i}})kA$ for all $i\in \mathbb{N}$;

(iii) $\mu _{kA}({J_{i}})=\mu _{kA}({J_{j}})$ and $\mu _{kA}({J_{i}})$
consists of proper ideals of $kA$ for all $i,j\in \mathbb{N}$.

\end{lemma}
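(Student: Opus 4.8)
The plan is to assemble the desired abelian data by specializing the group-ring picture built up in Lemma 12 and then translating it via the central subgroup $A$ of finite index in $G/I^{\dag}$. First I would invoke Lemma 12: since $M$ is a purely semi-primitive irreducible $FG$-module, we obtain an absolutely Hilbert domain $R\leq F$, a descending chain $\{G_i\mid i\in\mathbb{N}\}$ of finite-index subgroups with $K=\cap_{i\in\mathbb{N}}G_i$ and $G/K$ free abelian, elements $\{a_i\}\subseteq M$ with ${a_i}RG={a_i}RG_i\otimes_{RG_i}RG$, and a $G$-large ideal $I$ of $RK$ together with $k=R/(R\cap I)$ and a free abelian central subgroup $A$ of finite index in $G/I^{\dag}$, satisfying parts (i)--(iii) of Lemma 12. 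Because $R\leq F$ is a finitely generated domain of characteristic zero, its field images are locally finite, so $k=R/(R\cap I)$ is a locally finite field; this gives the field $k$ demanded by the statement, and the free abelian group $A$ is the one from Lemma 12.

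The heart of the argument is to produce the ideals $J_i$ and the subgroups $A_i$. For each $i$, since $K\leq G_i\leq G$ and $I$ is $G$-large, I would set $w_i$ to be the image of $a_i$ in $({a_i}RG)/({a_i}RG)I$ and $J_i=\mathrm{Ann}_{kA}(w_i)$, and take $A_i=(G_i/I^{\dag})\cap A$, which is a subgroup of finite index in $A$ since $|G:G_i|<\infty$. The hypotheses of Proposition 4 are met with $H=G_i$: we have $K\leq G_i$, the decomposition ${a_i}RG={a_i}RG_i\otimes_{RG_i}RG$ holds, and by Lemma 12(i) we have $({a_i}RG)/({a_i}RG)I\neq 0$. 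Proposition 4(i) then yields $J_i=(J_i\cap kA_i)kA$, which is exactly assertion (ii). Proposition 4(ii) gives $\mu_{kA}(({a_i}RG)/({a_i}RG)I)=\mu_{kA}(J_i)$, and combining this with Lemma 12(ii) (the equality of the supports of the modules across all $i,j$) produces $\mu_{kA}(J_i)=\mu_{kA}(J_j)$ for all $i,j$. Since by Lemma 12(i) the module $({a_i}RG)/({a_i}RG)I$ is non-zero and $kA$-torsion, Proposition 4(iii) shows that $\mu_{kA}(J_i)$ consists of proper ideals of $kA$; this completes assertion (iii).

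For assertion (i) I would transport the chain conditions on $\{G_i\}$ down to $\{A_i\}$. The intersection $\cap_{i\in\mathbb{N}}A_i$ corresponds to $(\cap_{i\in\mathbb{N}}G_i/I^{\dag})\cap A=(K/I^{\dag})\cap A$, and since $K/I^{\dag}$ has finite index $\dag$-image inside the finite group $K/I^{\dag}$ while $A$ is torsion-free free abelian, this intersection is trivial; more carefully, because $A$ is free abelian central of finite index in $G/I^{\dag}$ and $K\leq G_i$ for all $i$ with $\cap G_i=K$, the image $K/I^{\dag}$ is finite (as $|K/I^{\dag}|<\infty$ by $G$-largeness), so its intersection with the torsion-free $A$ is $1$, giving $\cap_{i\in\mathbb{N}}A_i=1$. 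Finally, the condition $chark\notin\pi(A_1/A_i)$ follows from Lemma 12(iii): we have $chark\notin\pi(G_1/G_i)$, and since $A_1/A_i$ embeds into the image of $G_1/G_i$ in $G/I^{\dag}$, its order divides a power coming from $|G_1:G_i|$, so no prime dividing $|A_1:A_i|$ equals $chark$.

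I expect the main obstacle to be the careful verification that $\cap_{i\in\mathbb{N}}A_i=1$ and that the prime set $\pi(A_1/A_i)$ is genuinely controlled by $\pi(G_1/G_i)$, since passing through the finite quotient $I^{\dag}$ and intersecting with $A$ can in principle introduce torsion or new primes. The resolution should come from the finiteness of $K/I^{\dag}$ together with the torsion-freeness of $A$, exactly in the spirit of Lemma 1(ii) and Lemma 2, which let one pass between a chain and its image modulo a finite central subgroup without losing the descending-intersection property; invoking those lemmas (or their underlying idea) is what makes the descent to $\{A_i\}$ rigorous.
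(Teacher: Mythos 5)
Your proposal is correct and follows essentially the same route as the paper: it takes the data from Lemma 12, sets $A_i=(G_i/I^{\dag})\cap A$, $w_i$ the image of $a_i$ in $(a_iRG)/(a_iRG)I$ and $J_i=\mathrm{Ann}_{kA}(w_i)$, derives (ii) and (iii) from Proposition 4 combined with Lemma 12(i)--(iii), and gets (i) from the finiteness of $K/I^{\dag}$ together with the torsion-freeness of $A$. The only cosmetic difference is that you make explicit the (correct) observation that $k$ is locally finite because $R$ is absolutely Hilbert, which the paper leaves implicit.
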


\begin{proof}

Let  $R\leq F$ be an absolutely Hilbert domain, $\{{G_{i}}|i\in \mathbb{N}\}$
be a descending chain of subgroups of finite index in $G$  and $\{{a_{i}}
|i\in \mathbb{N}\}$ be a subset of $M$  which exist by Lemma 12. Then the
quotient group $G/K$ is free abelian, where $K=\cap _{i\in \mathbb{N}}{G_{i}}
$, and ${a_{i}}RG={a_{i}}RG_{i}\otimes _{RG_{i}}RG$ for all $i\in \mathbb{N}$. 

Further, let  $I$ be a $G$-large ideal $I$ of $RK$ which exists by Lemma 12.
Let  $A$ be a free abelian central subgroup of finite index in $G/I^{\dag }$
and let $k=R/(R\cap I)$. Put $A_{i}=(G_{i}/I^{\dag })\cap A$ then $A_{i}$ is
a free abelian group for each $i\in \mathbb{N}$. As $\{{G_{i}}|i\in \mathbb{N
}\}$ is a descending chain of subgroups of finite index in $G$,   it is easy
to note that $\{{A_{i}}|i\in \mathbb{N}\}$ is a descending chain of
subgroups of finite index in $A$. Let ${w_{i}}$ be the image of ${a_{i}}$ in 
$({a_{i}}RG)/({a_{i}}RG)I$ and put ${J}_{i}=Ann_{kA}({w_{i})}$.

 (i) As $A_{i}$ is free abelian and $K/I^{\dag }$ is finite, we have 
$A_{i}\cap (K/I^{\dag })=1$. Then, as  $K/I^{\dag }=\cap _{i\in \mathbb{N}
}(G_{i}/I^{\dag })$, we can conclude that $\cap _{i\in \mathbb{N}}{A_{i}}=1$
. By Lemma 12(iii), $chark\notin \pi (G_{1}/G_{i})$ for all $i\in \mathbb{N}$
and hence $chark\notin \pi (A_{1}/{A_{i}})$ for all $i\in \mathbb{N}$.

(ii) As ${a_{i}}RG={a_{i}}RG_{i}\otimes _{RG_{i}}RG$, it follows from
Proposition 4(i) that ${J_{i}}=({J_{i}}\cap k{A_{i}})kA$ for all $i\in 
\mathbb{N}$.

(iii) By Lemma 12(ii), $\mu _{kA}(({d_{i}}RG)/({d_{i}}RG)I)=\mu _{kA}(({d_{j}
}RG)/({d_{j}}RG)I)$ for all $i,j\in \mathbb{N}$. Then it follows from
Proposition 4(ii) that $\mu _{kA}({J_{i}})=\mu _{kA}({J_{j}})$ for all $
i,j\in \mathbb{N}$. By Lemma 12(i), $({d_{i}}RG)/({d_{i}}RG)I\neq 0$ and $({
d_{i}}RG)/({d_{i}}RG)I$  is ${kA}$-torsion. Therefore, by Proposition
4(iii), $\mu _{kA}({J_{i}})$ consists of proper ideals of $kA$ for all $i\in 
\mathbb{N}$.  

\end{proof}


\begin{theorem}
Let $F$ be a finitely generated field of characteristic zero and let $G$ be
a finitely generated nilpotent group. Let $M$ be an irreducible $FG$-module.
Then there are an element $0\neq a\in M$ and a subgroup $L\leq G$ such that $
aFG=aFL\otimes _{FL}FG$ and $aFL$ is a primitive $FL$-module.

\end{theorem}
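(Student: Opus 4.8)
The plan is to prove Theorem 1 by contradiction, using the machinery built up in Lemmas 9 through 13. The key observation is that the statement of Theorem 1 is logically equivalent to saying that $G$ has no purely semi-primitive irreducible $FG$-module. Indeed, any irreducible $FG$-module $M$ is induced from a semi-primitive module of some subgroup; if no semi-primitive module is purely semi-primitive, we can keep inducing up from a primitive module, which is exactly the desired conclusion with $a$ a cyclic generator and $L$ the relevant subgroup. So I would assume, for contradiction, that a purely semi-primitive irreducible $FG$-module $M$ exists, and then derive an impossibility.

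Under that assumption, Lemma 13 hands me a very rigid piece of commutative data: a finitely generated free abelian group $A$, a locally finite field $k$, a strictly useful descending chain $\{A_i\mid i\in\mathbb{N}\}$ of finite-index subgroups with $\cap_i A_i = 1$ and $\operatorname{char} k \notin \pi(A_1/A_i)$, and a family of ideals $J_i = (J_i\cap kA_i)kA$ of $kA$ all having the same minimal primes $\mu_{kA}(J_i) = \mu_{kA}(J_1)$, this common set consisting of proper ideals. The strategy is then to extract a single prime $P \in \mu_{kA}(J_1)$ and track its contractions $P\cap kA_i$ down the chain. First I would invoke Lemma 7(iii) to transfer the equality $\mu_{kA}(J_i)=\mu_{kA}(J_j)$ down to each subgroup, obtaining $\mu_{kA_i}(J_i\cap kA_i)=\mu_{kA_i}(J_j\cap kA_i)$ for the relevant indices, and Lemma 7(iv) to keep the contracted minimal primes proper (hence nonzero, by the cited result from \cite{Tush2000}).

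The heart of the argument should be to show that the contracted prime $P\cap kA_i$ must eventually stabilize or force $P$ to descend to a principal-type ideal via Lemma 8, and then to contradict $\cap_i A_i = 1$. Concretely, fix $P\in\mu_{kA}(J_1)$ and set $Q_i = P\cap kA_i$. Since $J_i = (J_i\cap kA_i)kA$ and the hypotheses of Lemma 8 are met (finite rank free abelian $A$, finite-index $A_i$, $\operatorname{char} k\notin\pi(A/A_i)$ — obtainable from $\operatorname{char} k\notin\pi(A_1/A_i)$ after replacing $A$ by $A_1$), whenever $Q_i$ has a \emph{unique} prime of $\mu_{kA}(J_i)$ lying over it, Lemma 8 gives $P = Q_i kA$, i.e.\ $P$ is extended from the subgroup $kA_i$. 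The plan is to argue that for $i$ large this uniqueness holds — using the finiteness of $\mu_{kA}(J_1)$ together with Lemma 7(ii) to rule out a properly descending tower of contractions — so that $P$ is extended from $kA_i$ for all large $i$. But a fixed nonzero prime $P$ cannot be extended from $kA_i$ for all $i$ once the intersection $\cap_i A_i = 1$ collapses the chain; a nonzero element of $P$ would have to lie in $kA_i$ for arbitrarily large $i$, forcing its support into $\cap_i A_i = 1$, i.e.\ $P \subseteq k\cdot 1$, which contradicts $P$ being a proper nonzero prime of $kA$.

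I expect the main obstacle to be the middle step: establishing that the uniqueness hypothesis of Lemma 8 genuinely holds for all sufficiently large $i$, rather than just for a cofinal set of indices. The finiteness of $\mu_{kA}(J_1)$ controls how many primes can lie over a given $Q_i$, but one must rule out the pathology in Lemma 7(ii) where some $S\in\mu_{kA}(J_i)$ has $S\cap kA_i$ strictly below $P\cap kA_i$ for infinitely many $i$; handling this likely requires combining the common-minimal-primes condition (iii) of Lemma 13 with a dimension or chain-stabilization argument on the finitely many contracted primes, perhaps reminiscent of the stabilization phenomena proved in Lemmas 1 and 2. Once uniqueness is secured and Lemma 8 applied, the final descent contradicting $\cap_i A_i = 1$ should be comparatively routine.
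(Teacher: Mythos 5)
Your architecture matches the paper's: reduce by induction on $h(G)$ to the non-existence of a purely semi-primitive irreducible module, extract the data of Lemma 13, push the common set of minimal primes down the chain $\{A_i\}$ with Lemma 7, extend a prime back up with Lemma 8, and contradict $\bigcap_i A_i=1$. The genuine gap is exactly the step you flag as ``the main obstacle'': you never establish the uniqueness hypothesis of Lemma 8, and the way you anchor it it can actually fail. You fix $P\in\mu_{kA}(J_1)$ and hope that for large $i$ only one prime of $\mu_{kA}(J_i)$ lies over $Q_i=P\cap kA_i$; but two distinct primes of $\mu_{kA}(J_1)$ may have equal contractions to $kA_i$ for \emph{every} $i$ (already for $A=\langle y\rangle$, $A_i=\langle y^{2^i}\rangle$, $\mathrm{char}\,k\neq 2$, the primes $(y-1)$ and $(y+1)$ contract to the same prime of each $kA_i$), so no appeal to Lemma 7(ii) or to ``a dimension or chain-stabilization argument'' at the level of $kA$ can rescue this. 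The paper instead sets $\mu_i=\mu_{kA_i}(J_j\cap kA_i)$, notes via Lemma 7(i) that contraction maps (a subset of) $\mu_i$ \emph{onto} $\mu_{i+1}$, so that $|\mu_1|\ge|\mu_2|\ge\cdots$ stabilizes at some $n$, and then for $i\ge n$ the equality $|\mu_n|=|\mu_i|$ forces contraction to be a bijection of $\mu_n$ onto $\mu_i$; the uniqueness needed for Lemma 8 therefore holds for the pair $A_n\ge A_i$ (where also $\mathrm{char}\,k\notin\pi(A_n/A_i)$ and $J_i\cap kA_n=(J_i\cap kA_i)kA_n$ are available), giving $P=(P\cap kA_i)kA_n$ for every $P\in\mu_n$ and every $i\ge n$. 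Re-anchoring at the stabilized level $A_n$ rather than at $kA$ is not cosmetic; it is what makes the argument work.

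The final descent also needs repair. From $P=(P\cap kA_i)kA_n$ for all $i\ge n$ you cannot conclude that a fixed nonzero element of $P$ lies in every $kA_i$; indeed $\bigcap_i(P\cap kA_i)=P\cap k=0$ for proper $P$, so ``forcing its support into $\bigcap_i A_i=1$'' does not apply to any single element, and the intermediate claim $P\subseteq k\cdot 1$ is never reached. The correct (and still short) argument: $kA_n=\bigoplus_{t\in T}kA_it$ over a transversal $T$ of $A_i$ in $A_n$, hence $P=\bigoplus_{t\in T}(P\cap kA_i)t$; take $0\neq x\in P$ with $1\in\mathrm{supp}(x)$ and choose $i$ so large that $\mathrm{supp}(x)\cap A_i=\{1\}$; the component of $x$ on the trivial coset is then a nonzero scalar lying in $P\cap kA_i$, so $P=kA_n$, contradicting the fact that $\mu_n$ consists of proper ideals (Lemma 13(iii) together with Lemma 7(iv)).
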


\begin{proof}

We apply induction on the Hirsch number $h(G)$ of $G$. If $h(G)=0$ then $G$
is finite. Therefore, $dim_{F}M$ is finite and the assertion follows. Then
by the induction hypothesis we can assume that the module $M$ is
semi-primitive. Suppose that the assertion does not hold then the module $M$
is purely semi-primitive.

By Lemma 13, there exist a finitely generated free abelian group $A$, a
locally finite field $k$, a descending chain $\{{A_{i}}|i\in \mathbb{N}\}$
of subgroups of finite index in $A$ and a set $\{J_{i}|i\in \mathbb{N}\}$ of
ideals of the group algebra $kA$ such that:

(i) $\cap _{i\in \mathbb{N}}{A_{i}}=1$ and $chark\notin \pi (A_{1}/{A_{i}})$
for all $i\in \mathbb{N}$;

(ii) ${J_{i}}=({J_{i}}\cap k{A_{i}})kA$ for all $i\in \mathbb{N}$;

(iii) $\mu _{kA}({J_{i}})=\mu _{kA}({J_{j}})$ and $\mu _{kA}({J_{i}})$
consists of proper ideals of $kA$ for all $i,j\in \mathbb{N}$.

As $\mu _{kA}({J_{i}})=\mu _{kA}({J_{j}})$ for all $i,j\in \mathbb{N }$,
we can put $\mu =\mu _{kA}({J_{i}})$ for all $i\in \mathbb{N}$.

It follows from (iii) and Lemma 7(iii) that $\mu _{kA_{t}}({J_{i}\cap }
kA_{t})=\mu _{kA_{t}}({J_{j}\cap }kA_{t})$ for all $i,j,t\in \mathbb{N}$.
Then we can put $\mu _{i}=\mu _{kA_{i}}({J_{j}\cap }kA_{i})$ for all $i,j\in 
\mathbb{N}$.

It follows from Lemma 7(i) that for any $J_{j}$ and any $Q\in \mu
_{kA_{i+1}}({J_{j}\cap kA}_{i+1})$ there is $P\in \mu _{kA_{i}}({J_{j}\cap kA
}_{i})$ such that $P\cap kA_{i+1}=Q$. It easily implies that $|\mu _{i}|\geq
|\mu _{i+1}|$ for all $i\in \mathbb{N}$. Therefore, there is $n\in \mathbb{N}
$ such that $|{\mu _{n}}|=|{\mu _{i}}|$ for all $i\geq n$.

Then for any $i\geq n$ and any $Q\in \mu _{kA_{i}}({J_{i}\cap kA}_{i})={\mu
_{i}}$ there is only one prime $P\in \mu _{kA_{n}}({J_{i}\cap kA}_{n})={\mu
_{n}}$ such that $P\cap kA_{i}=Q$. It follows from (i) that $chark\notin \pi
(A_{n}/{A_{i}})$ and it follows from (ii) that ${J_{i}}\cap k{A_{n}}=({J_{i}}
\cap k{A_{i}})kA_{n}$. Then it follows from Lemma 8 that $P=Q{kA}_{n}=(P\cap
kA_{i}){kA}_{n}$. On the other hand, as $|{\mu _{n}}|=|{\mu _{i}}|$, for
any $P\in \mu _{kA_{n}}({J_{i}\cap kA}_{n})={\mu _{n}}$ we have $P\cap
kA_{i}=Q\in \mu _{kA_{i}}({J_{i}\cap kA}_{i})={\mu _{i}}$. Therefore, for
any $i\geq n$ and any $P\in {\mu _{n}}$ we have $P=Q{kA}_{n}=(P\cap kA_{i}){
kA}_{n}$ and, as $\cap _{i\in \mathbb{N}}{A_{i}}=1$, it implies that $P={kA}
_{n}$. But, as $\mu $ consists of proper ideals of $kA$, it follows from
Lemma 7(iv) that ${\mu _{n}}$ consists of proper ideals of ${kA}_{n}$ and a
contradiction is obtained.

\end{proof}

\end{document}